\numberwithin{equation}{section}
\newcommand{\CP}{{\mathbb C}\!\operatorname{P}^1}
\newcommand{\SL}{\operatorname{SL}(2,{\mathbb R})}
\newcommand{\SLR}{\operatorname{SL}(2,\mathbb R)}
\newcommand{\GLR}{\operatorname{GL}(2,\mathbb R)}
\newcommand{\PSL}{\operatorname{PSL}(2,{\mathbb R})}
\newcommand{\PGL}{\operatorname{PGL}(2,{\mathbb R})}
\DeclareMathOperator{\diam}{\operatorname{diam}}
\newcommand{\cB}{\mathcal{B}}
\newcommand{\cC}{\mathcal{C}}
\newcommand{\cH}{\mathcal{H}}
\newcommand{\cL}{\mathcal{L}}
\newcommand{\cM}{\mathcal{M}}
\newcommand{\cQ}{\mathcal{Q}}
\newcommand\Z{\mathbb Z}
\newcommand\ZZ{\mathbb Z}
\newcommand\N{\mathbb N}
\newcommand\R{\mathbb R}
\newcommand\C{\mathbb C}
\newcommand\Dk{\Delta\widetilde{\kappa}}
\newcommand\Dc{\Delta\widetilde{c}_{\mathit{area}}}
\newcommand{\carea}{c_{\mathit{area}}}
\newcommand{\noz}{n}
\renewcommand{\epsilon}{\varepsilon}
\newlength{\halfbls}\setlength{\halfbls}{.5\baselineskip}
\newtheorem{Theorem}{Theorem}
\newtheorem*{NNTheorem}{Theorem}
\newtheorem{Proposition}{Proposition}
\newtheorem{Lemma}{Lemma}[section]
\newtheorem{Corollary}{Corollary}
\newtheorem{Problem}{Problem}
\newtheorem{Question}{Question}
\theoremstyle{remark}
\newtheorem{Remark}{Remark}
\begin{document}

\begin{picture}(0,0)(0,-15)
\put(15,40){
\parbox{300pt}{\small\textit{
\hspace*{-5pt}
``You, my forest and water! One swerves, while the other shall spout\\
\hspace*{5pt} Through your body like draught; one declares, while the first has a doubt.''\\
\hspace*{180pt}J.~Brodsky}}
}
\end{picture}

\begin{picture}(0,0)(0,20)
\put(150,40){
\parbox{7cm}{\small\textit{
\hspace*{-5pt}``Mein Vater, mein Vater, und hoerest du nicht,\\
Was Erlenkoenig mir leise verspricht?''\\
``Sei ruhig, bleib ruhig, mein Kind!\\
In duerren Blaettern saeuselt der Wind.''\\
\hspace*{3.5cm}J.~W.~Goethe}}
}
\end{picture}

\title{Cries and Whispers in Wind-tree Forests}

\date{February 21, 2014}

\dedicatory{
To  the  memory  of Bill Thurston\\
with admiration for his fantastic imagination.}

\author{Vincent Delecroix}
\thanks{Research  of  the  authors  is  partially  supported  by  ANR
``GeoDyM''}
\address{
LaBRI,
Domaine universitaire,
351 cours de la Lib\'eration, 33405 Talence, FRANCE
}
\email{20100.delecroix@gmail.com}

\author{Anton Zorich}
\thanks{Research of the second author is partially supported by IUF}
\address{
Institut Universitaire de France;
Institut de Math\'matiques de Jussieu --
Paris Rive Gauche,
UMR7586,
B\^atiment Sophie Germain,
Case 7012,
75205 PARIS Cedex 13, France}
\email{anton.zorich@imj-prg.fr}

\begin{abstract}
We study billiard in the plane endowed with symmetric $\Z^2$-periodic
obstacles  of  a  right-angled  polygonal  shape.  One  of  our  mane
interests  is dependence of the diffusion rate of the billiard on the
shape  of the obstacle. We prove, in particular, that when the number
of angles of a symmetric connected obstacle grows, the diffusion rate
tends to zero, thus answering a question of J.-C.~Yoccoz.

Our  results  are  based  on computation of Lyapunov exponents of the
Hodge  bundle  over  hyperelliptic  loci  in  the  moduli  spaces  of
quadratic  differentials,  which  represents independent interest. In
particular,  we  compute  the  exact  value  of the Lyapunov exponent
$\lambda^+_1$  for  all elliptic loci of quadratic differentials with
simple zeroes and poles.
\end{abstract}

\maketitle

\section{Introduction}

The  classical wind-tree model corresponds to a billiard in the plane
endowed  with  $\Z^2$-periodic  obstacles  of  rectangular shape; the
sides   of   the  rectangles  are  aligned  along  the  lattice,
see Figure~\ref{fig:windtree}.

\includegraphics{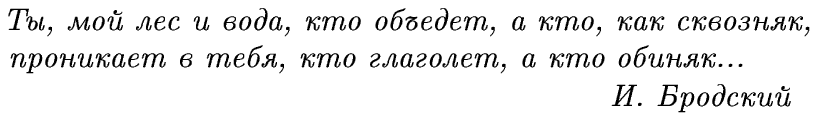}

\begin{figure}[htb]
%
   %
\includegraphics{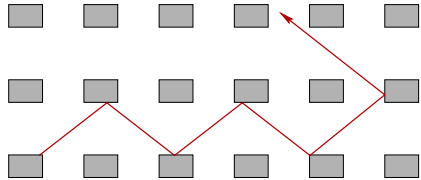}
\vspace{70bp}
\caption{
\label{fig:windtree}
Original wind-tree model.
   }
\end{figure}

The    wind-tree   model
(in a slightly different version)
was   introduced   by   P.~Ehrenfest   and
T.~Ehrenfest~\cite{Ehrenfest}  about  a  century  ago and studied, in
particular,  by  J.  Hardy and J.~Weber~\cite{Hardy:Weber}. All these
studies had physical motivations.

Several advances were obtained recently using the powerful technology
of  deviation  spectrum  of  measured  foliations on surfaces and the
underlying  dynamics  in  the moduli space. For all parameters of the
obstacle  and for almost all directions the trajectories are known to
be recurrent~\cite{Avila:Hubert}; there are examples of
divergent                   trajectories                  constructed
in~\cite{Delecroix:divergent:trajectories};   the  non-ergodicity  is
proved       in~\cite{Fraczek:Ulcigrai}.      It      was      proved
in~\cite{Delecroix:Hubert:Lelievre}   that   the  diffusion  rate  is
$\frac{2}{3}$;
it does not depend either on the concrete values of
parameters of the obstacle or on almost any direction and almost any
starting point, see Figure~\ref{fig:windtree:variations}.

\begin{figure}[htb]
%
%
\includegraphics{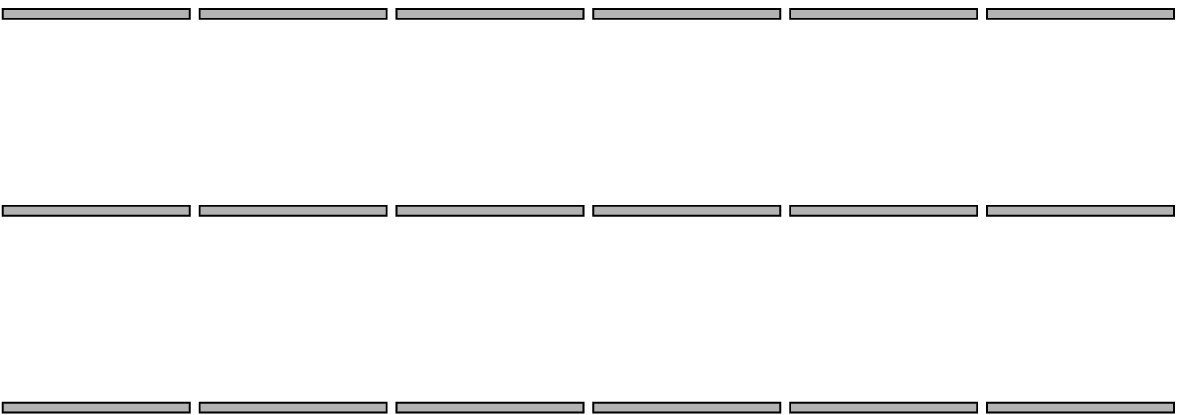}
\includegraphics{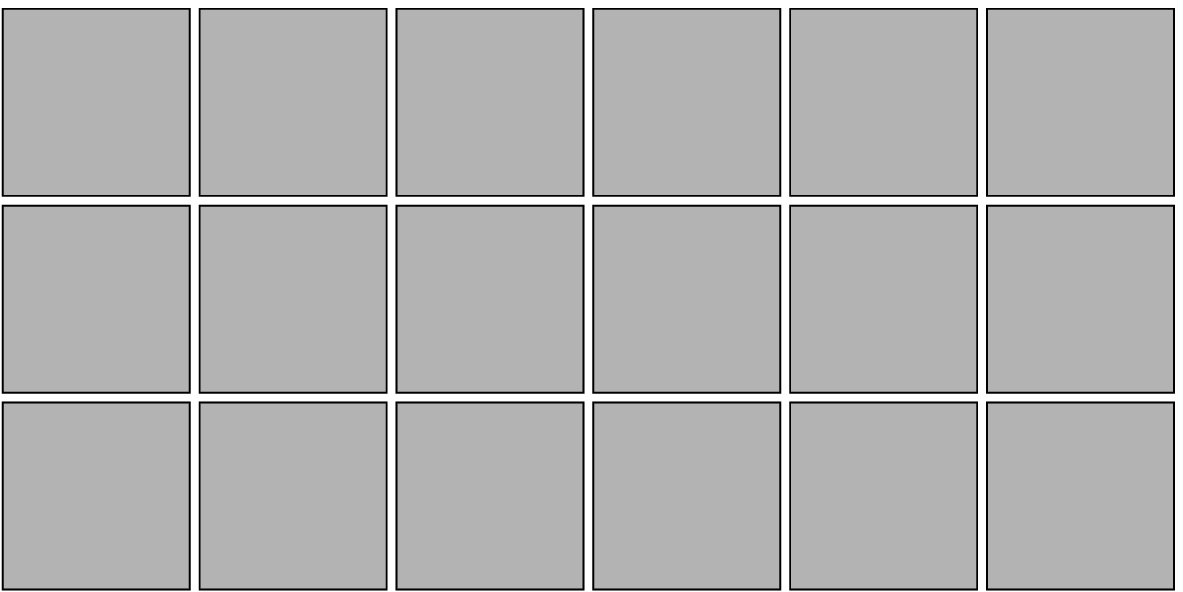}
\vspace{65bp}
\caption{
\label{fig:windtree:variations}
The diffusion rate $\frac{2}{3}$ does not depend on particular values
of  the  parameters  of the rectangular scatterer: it is the same for
the  plane  with  horizontal  walls  having  tiny periodic holes, for
narrow  periodic  corridors  between ``chocolate plates'' and for any
other periodic billiard as in Figure~\ref{fig:windtree}.
   }
\end{figure}

In  other  words,  the  maximal  deviation of the trajectory from the
starting point during the time $t$ has the order of $t^{\frac{2}{3}}$
for  large  $t$ in the following sense:
$$
\lim_{t\to\infty}
\frac{\log\diam(\text{trajectory for time interval}[0,t])}{\log t}
=\frac{2}{3}\,.
$$
Thus,  this  behavior  is  quite different from the
brownian  motion, random walk in the plane, or billiards in the plane
with  periodic  dispersing  scatterers: for all of them the diffusion
has   the   order  $\sqrt{t}$  (and,  thus,  the  diffusion  rate  is
$\frac{1}{2}$).

We address the natural question ``what happens if we change the shape
of  the obstacle?''. We do not have ambition to solve this problem in
the  current  paper  in  the  most general setting. We just plant the
wind-tree  forest  with several interesting families of obstacles and
study  the diffusion rate as the combinatorics of the obstacle inside
the  family becomes more complicated. We show, in particular, that if
the  obstacle  is  a  connected  symmetric right-angled polygon as on
Figure~\ref{fig:windtree:swiss:cross}, then the diffusion rate in the
corresponding  wind-tree  model tends to zero as the number of corners
of  the obstacle grows; see Theorem~\ref{th:single:obstacle} for more
precise  statement.  This result gives an explicit affirmative answer
to a question addressed by J.-C.~Yoccoz.

\begin{figure}[htb]
%
   %
\includegraphics{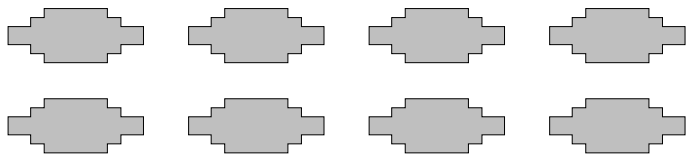}
\vspace{60bp}
\caption{
\label{fig:windtree:swiss:cross}
The  diffusion  rate  in  this wind-tree forest tends to zero when the
number of corners of the obstacle grows.
   }
\end{figure}

Now,  when  we have showed that for certain species of wind-trees the
sound  in  the  wind-tree  forest propagates ``as a whisper'' (in the
sense  that  the  diffusion  rate tends to zero), the challenge is to
prove  that  for certain other species it propagates ``as a cry'' with
the diffusion rate approaching $1$.

\begin{Question}
Are  there periodic wind-tree billiards with diffusion rate arbitrary
close to 1? Are there continuous families of wind-tree billiards like
this?  What  are  the shapes of the obstacles which provide diffusion
rate arbitrary close to 1?
\end{Question}

\subsection{Strategy of the proof.}
\label{ss:strategy}
We  develop  the  approach  originated  in  the  pioneering  work  of
\mbox{V.~Delecroix},               \mbox{P.~Hubert},              and
S.~Leli\`evre~\cite{Delecroix:Hubert:Lelievre}    who   applied   the
results  from  dynamics  in  the  moduli space to the wind-tree model.
Several very deep recent advances in dynamics in moduli spaces are of
crucial importance for us.

Following~\cite{Delecroix:Hubert:Lelievre}    we    reformulate   the
original  billiard problem in terms of the deviation spectrum for the
leaves  of  directional  measured  foliations  on the associated flat
surface $S$. This part is quite elementary and straightforward.

By  recent  deep result of J.~Chaika and A.~Eskin~\cite{Chaika:Eskin}
(based   on   fundamental   advances   of   A.~Eskin,  M.~Mirzakhani,
A.~Mokhammadi~\cite{Eskin:Mirzakhani},
\cite{Eskin:Mirzakhani:Mohammadi})    almost    all   directions   on
\textit{every}  flat  surface  are  Lyapunov-generic.  Combining  the
techniques~\cite{Forni},  \cite{Zorich:How:do} of deviation
spectrum  with  the  results from~\cite{Delecroix:Hubert:Lelievre} we
conclude  that  the  diffusion  rate  of the wind-tree billiard as in
Figure~\ref{fig:windtree:swiss:cross} equals to the Lyapunov exponent
$\lambda^+(h^\ast)=\lambda^+(v^\ast)$   of   certain   very  specific
integer cocycles $h^\ast,v^\ast\in H^1(S,\Z)$, where the flat surface
$S$  is  considered  as  a point of the orbit closure $\cL(S)$ in the
ambient  stratum  of  meromorphic quadratic differentials; the vector
space $H^1(S,\C)$ containing $h^\ast$ and $v^\ast$ is considered as
a  fiber  over $S$ of the complex Hodge bundle over $\cL(S)$, and the
Lyapunov  exponents  are  the Lyapunov exponents of the complex Hodge
bundle  $H^1_{\C}$  over $\cL(S)$ with respect to the Teichm\"uller
geodesic flow.

In  the  current  paper  we  intentionally  focus  on  the  family of
billiards  as in Figure~\ref{fig:windtree:swiss:cross} since for this
family  the  rest  of  the  computation  is particularly transparent.
Namely,  the  flat surface $S$ belongs to the \textit{hyperelliptic} locus
$\cL:=\cQ^{hyp}(1^{2m},-1^{2m})$  over $\cQ(1^m,-1^{m+4})$.
(In our particular situation, the \textit{hyper}elliptic
locus is, actually, ``elliptic'': the genus of the covering surface is $1$.)
Moreover,
applying   the   arguments  analogous  to  those  in~\cite{AEZ},  one
immediately  verifies  that the family $\cB$ of billiards is so large
(in  dimension)  that  it is transversal to the unstable foliation in
the  ambient invariant submanifold $\cL$, and, thus, for almost every
billiard  table  $\Pi$  in  $\cB$  the  orbit closure $\cL(S)$ of the
associated  flat  surface  $S(\Pi)$  coincides  with the entire locus
$\cL$.

The  flat  surface  $S$  has  genus  one, so the complex Hodge bundle
$H^1_{\C}=H^1_+$    has    single    positive   Lyapunov   exponent
$\lambda_1^+$.  The  fact that $h,v$ are \textit{integer} immediately
implies that $\lambda^+(h)=\lambda^+(v)=\lambda^+_1$.

Formula~(2.3)   from~\cite{EKZ}   expresses   the  sum  $\sum_{i=1}^g
\lambda^+_i$  of  all positive Lyapunov exponents of $H^1_+$ in terms
of  the  degrees  of  zeroes  (and poles) in the ambient locus and in
terms  of  the  Siegel--Veech  constant  $\carea(\cL)$.  Since in our
particular  case  the  genus of the surface is equal to one, we get a
formula  for  the individual Lyapunov exponent $\lambda_1^+$ in which
we are interested.

Developing  Lemma~(1.1)  from~\cite{EKZ}  we relate the Siegel--Veech
constant     $\carea(\cL)$     of     the     hyperelliptic     locus
$\cL:=\cQ^{hyp}(1^{2m},-1^{2m})$   over  $\cQ(1^m,-1^{m=4})$  to  the
Siegel--Veech  constants $c_\cC(\cQ(1^m,-1^{m+4}))$ of the underlying
stratum  in  genus  zero.  Plugging  in  the resulting expression the
explicit  values of $c_\cC(\cQ(1^m,-1^{m+4}))$ obtained in the recent
paper~\cite{AEZ}  and  proving certain combinatorial identity for the
resulting  hypergeometric sum we obtain the desired explicit value of
$\lambda^+_1(\cQ^{hyp}(1^{2m},-1^{2m}))$,     which represents     the
diffusion rate in almost every original billiard.

\begin{Remark}
Our results provide certain evidence that when the genus is fixed and
the number of simple poles grows, the Lyapunov exponents of the Hodge
bundle  tend  to  zero  (see~\cite{Grivaux:Hubert}  for  the original
conjecture).
\end{Remark}

\subsection{Structure of the paper}
In  section~\ref{s:Main:Results}  we  state  the  main  result in two
different  forms.  In section~\ref{s:From:billiards:to:flat:surfaces}
we  show  how  to  reduce  the  problem  of  the  diffusion rate in a
generalized  wind-tree  billiard  to the problem of evaluation of the
top  Lyapunov exponent $\lambda_1^+$ of the complex Hodge bundle over
an   appropriate   hyperelliptic  locus  of  quadratic  differentials.  In
section~\ref{ss:Original:wind:tree:revisited} we revisit the original
paper~\cite{Delecroix:Hubert:Lelievre} where this question is treated
in  all  details  for  the  original  wind-tree  model  with periodic
rectangular scatterers. We suggest, however, several simplifications.
Namely,  in  section~\ref{ss:elliptic:locus} we describe the hyperelliptic
locus  over certain stratum of meromorphic quadratic differentials in
genus  zero  where  lives  the  flat surface $S$ corresponding to the
wind-tree billiard and we show that the diffusion rate corresponds to
the  top  Lyapunov exponent $\lambda_1^+$ of the complex Hodge bundle
over   the  $\PSL$-orbit  closure  of  $S$.  Following  an  analogous
statement  in~\cite{AEZ}  we prove in section~\ref{ss:Orbit:closures}
that  for  almost  any  initial billiard table $\Pi$ the $\PSL$-orbit
closure  of  the  associated flat surface $S(\Pi)$ coincides with the
entire  hyperelliptic  locus.  At  this  stage  we  reduce  the problem of
evaluation of the diffusion rate for almost all billiard table in the
family  to  evaluation  of  the  single  positive  Lyapunov  exponent
exponent  of the complex Hodge bundle $H^1_+=H^1_{\C}$ over certain
specific hyperelliptic locus.

In   section~\ref{s:Siegel:Veech:constant:for:the:elliptic:locus}  we
evaluate   this   Lyapunov   exponent.   We  start  by  recalling  in
section~\ref{ss:sum:of:lambda:plus} the technique from~\cite{EKZ}; we
also relate the Siegel--Veech constant of the hyperelliptic locus with the
Siegel--Veech  constant of the corresponding stratum in genus $0$. In
section~\ref{ss:genus:zero} we summarize the necessary material on
cyclinder   configurations   in   genus   zero  and  on  the  related
Siegel--Veech       constants       from~\cite{Boissy:configurations}
and~\cite{AEZ}. Finally, in section~\ref{ss:one:complicated:obstacle}
we                   prove                   the                  key
Theorem~\ref{th:lambda1:as:ratio:of:double:factorials} evaluating the
desired   Lyapunov   exponent   $\lambda_1^+$.   The   proof  uses  a
combinatorial  identity for certain hypergeometric sum; this identity
is proved separately in section~\ref{s:combinatorial:identities}.

Following  the  title ``\textit{What's next?}'' of the conference, we
discuss   in   appendix~\ref{a:What:is:next}  directions  of  further
research in the area relevant to the context of this paper.


\section{Main results}
\label{s:Main:Results}

Denote by $\cB(m)$ the family of billiards such that the obstacle has
$4m$  corners  with  the  angle  $\pi/2$. Say, all billiards from the
original    wind-tree    family   as   in   Figures~\ref{fig:windtree}
and~\ref{fig:windtree:variations}  live  in $\cB(1)$; the billiard in
Figure~\ref{fig:windtree:swiss:cross}   belongs   to   $\cB(3)$;  the
billiard    in   Figure~\ref{fig:windtree:swiss:cross}   belongs   to
$\cB(17)$.

\begin{Theorem}
\label{th:single:obstacle}
For almost all billiard tables in the family $\cB(m)$
and for almost all directions
the diffusion rate $\delta(m)$ is the same and equals
$$
\delta(m)=\frac{(2m)!!}{(2m+1)!!}\,.
$$
When $m\to+\infty$ $\delta(m)$ has asymptotics
$$
\delta(m)=
\frac{\sqrt{\pi}}{2\sqrt{m}}
\left(1+O\left(\frac{1}{m}\right)\right)\,.
$$

\end{Theorem}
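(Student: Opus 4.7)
The plan is to follow the strategy already sketched in Section~\ref{ss:strategy}: perform a chain of reductions that carries the billiard problem to the single top Lyapunov exponent of the Hodge bundle over a very specific hyperelliptic locus, then evaluate that exponent by combining the Eskin--Kontsevich--Zorich formula with the genus-zero Siegel--Veech constants of~\cite{AEZ}, and finally dispatch an explicit combinatorial identity.

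I would begin by unfolding a billiard $\Pi\in\cB(m)$ via the Katok--Zemlyakov construction together with the $(\Z/2\Z)^2$-symmetrization inherited from the right-angled scatterer. This produces a genus-one flat surface $S(\Pi)$ in the hyperelliptic locus $\cL:=\cQ^{hyp}(1^{2m},-1^{2m})$ over $\cQ(1^m,-1^{m+4})$, together with two integer cohomology classes $h^\ast,v^\ast\in H^1(S,\Z)$ recording horizontal and vertical lattice displacement along a trajectory. Combining the deviation-spectrum technique of~\cite{Delecroix:Hubert:Lelievre} with the Chaika--Eskin theorem and the Eskin--Mirzakhani--Mohammadi classification of orbit closures, the diffusion rate $\delta(m)$ equals $\lambda^+(h^\ast)=\lambda^+(v^\ast)$; since $S(\Pi)$ has genus one and these classes are integer, this common value is the unique positive Lyapunov exponent $\lambda_1^+$ of the complex Hodge bundle over the orbit closure $\cL(S(\Pi))$. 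A transversality argument modelled on~\cite{AEZ} then shows that the family $\cB(m)$ is transverse to the unstable foliation inside $\cL$, so $\cL(S(\Pi))=\cL$ for Lebesgue-almost every $\Pi$, and the theorem reduces to a computation of $\lambda_1^+(\cL)$.

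Next I would apply formula~(2.3) of~\cite{EKZ}: in genus one this degenerates to a single linear relation expressing $\lambda_1^+(\cL)$ in terms of an explicit contribution from the multiplicities of the zeroes and poles $(1^{2m},-1^{2m})$ and the Siegel--Veech constant $\carea(\cL)$. I would then develop Lemma~(1.1) of~\cite{EKZ} for the cover $\cL\to\cQ(1^m,-1^{m+4})$ in order to write $\carea(\cL)$ as a weighted sum over admissible cylinder configurations $\cC$ of the underlying genus-zero stratum, each contribution being a known multiple of $c_\cC(\cQ(1^m,-1^{m+4}))$. The configurations for $\cQ(1^m,-1^{m+4})$ are enumerated in~\cite{Boissy:configurations} and their Siegel--Veech constants are given in closed form in~\cite{AEZ}, producing an explicit but unwieldy hypergeometric expression for $\lambda_1^+(\cL)$.

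The main obstacle, and the last step of the proof, is combinatorial: one must show that this hypergeometric expression collapses to the strikingly clean ratio $(2m)!!/(2m+1)!!$. I would isolate this into a single identity of shape $\sum_k \binom{m}{k} R(k,m)=(2m)!!/(2m+1)!!$, where $R$ is an explicit rational function read off from the AEZ formulas, and prove it either by induction on $m$, by a generating-function computation, or by recognizing it as a specialization of a classical Gauss-type ${}_2F_1$ summation at $1$; Section~\ref{s:combinatorial:identities} is evidently reserved for precisely this manipulation. Once the exact formula is in hand, the asymptotic $\delta(m)=\frac{\sqrt{\pi}}{2\sqrt{m}}\bigl(1+O(1/m)\bigr)$ is an immediate consequence of Stirling's formula applied to $(2m)!!/(2m+1)!!=4^m(m!)^2/(2m+1)!$.
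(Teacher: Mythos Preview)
Your proposal is correct and follows essentially the same route as the paper: reduce to $\lambda_1^+$ on the hyperelliptic locus via the Delecroix--Hubert--Leli\`evre argument together with Chaika--Eskin and the AEZ-style transversality, then evaluate $\lambda_1^+$ through the EKZ formula, the double-cover comparison of Siegel--Veech constants, the explicit genus-zero constants from~\cite{AEZ}, a combinatorial identity, and Stirling. The only cosmetic discrepancy is that the paper makes the full $(\Z/2\Z)^3$-symmetry explicit (unfolding to a surface $X$ before passing to the genus-one quotient $\tilde S$), whereas you telescope this into a single step; the content is the same.
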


Here   the   double   factorial   means   the  product  of  all  even
(correspondingly    odd)   natural   numbers   from   $2$   to   $2m$
(correspondingly from $1$ to $2m+1$). For the original wind-tree, when
the  obstacle  is  a  rectangle,  we  have $m=1$ and we get the value
$\delta(1)=\frac{2}{3}$ found in~\cite{Delecroix:Hubert:Lelievre}.

\begin{figure}[htb]
%
   %
\includegraphics{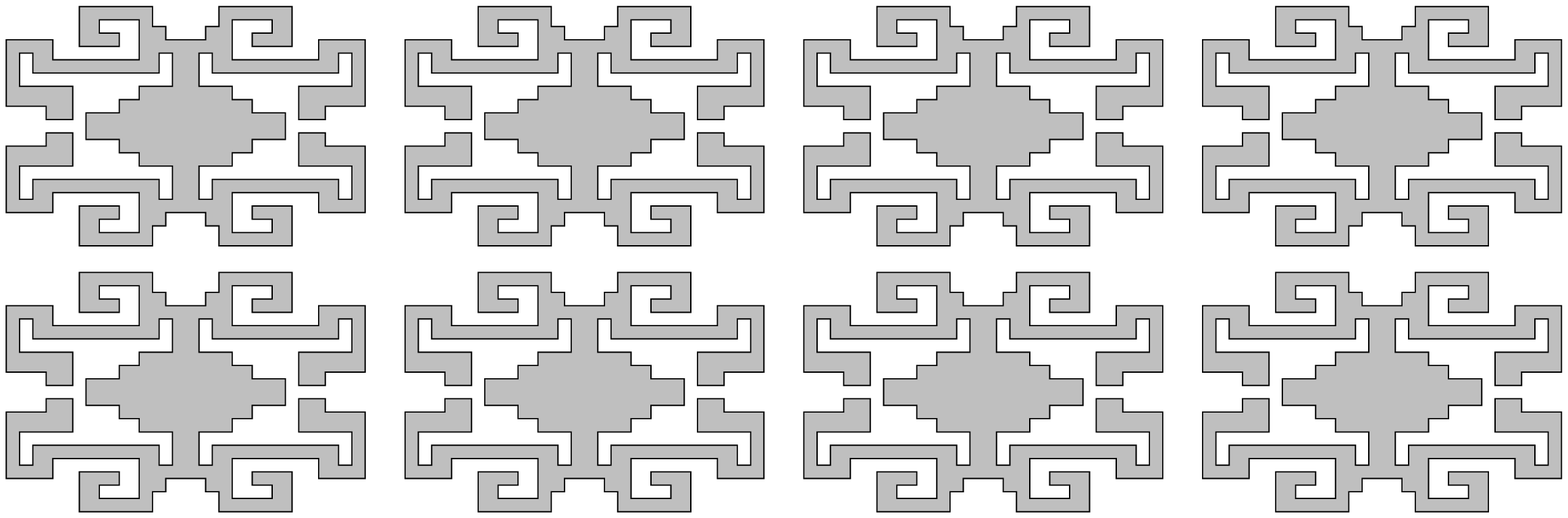}
\vspace{85bp}
\caption{
\label{fig:windtree:swiss:cross:alternative}
The  diffusion  rate  depends  only  on  the number of corners of the
obstacle  and  not  on  the  particular values of (almost all) length
parameters nor on the particular shape of the obstacle.
   }
\end{figure}

Following  the  strategy  described  in  section~\ref{ss:strategy} we
derive Theorem~\ref{th:single:obstacle} from the following result.

\begin{Theorem}
\label{th:lambda1:as:ratio:of:double:factorials}
The  locus  $\cQ^{hyp}(1^{2m},-1^{2m})$ over $\cQ(1^{m},-1^{m+4})$ is
connected  and  invariant  under  the  action  of $\PGL$. The measure
induced   on   $\cQ_1^{hyp}(1^{2m},-1^{2m})$  from  the  Masur--Veech
measure  on  $\cQ_1(1^m,-1^{m+4})$  is  $\PSL$-invariant  and ergodic
under  the  action  of  the Teichm\"uller geodesic flow. The Lyapunov
exponent  $\lambda^+_1(m)$ of the Hodge bundle $H^1_+$ over the locus
$\cQ^{hyp}(1^{2m},-1^{2m})$  under  consideration  has  the following
value:
\begin{equation}
\label{eq:lambda:plus:m}
\lambda^+_1(m)=\frac{(2m)!!}{(2m+1)!!}\,.
\end{equation}
\end{Theorem}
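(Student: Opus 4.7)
The plan is to follow the strategy outlined in section~\ref{ss:strategy}. For the assertions about connectedness and $\PGL$-invariance, I would exploit the fact that $\cQ^{hyp}(1^{2m},-1^{2m})$ over $\cQ(1^{m},-1^{m+4})$ arises from a canonical double covering ramified at a prescribed subset of the singularities of the base surface. Since genus-zero strata of meromorphic quadratic differentials with at most simple poles are connected, and since the ramification pattern is fixed by the discrete combinatorial data, connectedness of the hyperelliptic locus follows. The covering construction is natural and thus $\PGL$-equivariant, so the locus is $\PGL$-invariant. The induced measure is the pullback of the Masur--Veech measure, and $\PSL$-invariance and Teichm\"uller-flow ergodicity transfer from the base by standard arguments.

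The remaining work is the computation of $\lambda_1^+(m)$. Since the covering surfaces have genus one, the sum $\sum_i \lambda_i^+$ appearing in formula~(2.3) of~\cite{EKZ} reduces to the single quantity $\lambda_1^+$. Applying that formula to $\cQ^{hyp}(1^{2m},-1^{2m})$ expresses $\lambda_1^+(m)$ as an explicit combination of the orders of the zeroes and poles (all equal to $\pm 1$, with known multiplicities) plus a universal multiple of the Siegel--Veech constant $\carea(\cQ^{hyp}(1^{2m},-1^{2m}))$. The singularity contribution is an elementary rational function of $m$, so everything reduces to evaluating this Siegel--Veech constant.

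To treat $\carea$ on the hyperelliptic locus I would extend Lemma~(1.1) of~\cite{EKZ} relating Siegel--Veech constants of a cover to those of the base. Each maximal horizontal cylinder on a surface in $\cQ(1^m,-1^{m+4})$ lifts to either one or two cylinders on the canonical double cover, depending on whether its core curve meets the ramification locus; summing the area contributions with the appropriate multiplicities expresses $\carea(\cQ^{hyp}(1^{2m},-1^{2m}))$ as an explicit linear combination, indexed by the cylinder configurations $\cC$ in genus zero classified in~\cite{Boissy:configurations}, of the configuration Siegel--Veech constants $c_\cC(\cQ(1^m,-1^{m+4}))$.

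The individual constants $c_\cC(\cQ(1^m,-1^{m+4}))$ are explicitly known from~\cite{AEZ}. Plugging these values into the linear combination produces a hypergeometric sum in $m$ whose closed form is to be shown equal to $\frac{(2m)!!}{(2m+1)!!}$; I expect this combinatorial identity, which is isolated in section~\ref{s:combinatorial:identities}, to be the main technical obstacle, since it must collapse a weighted sum over all configurations to the very compact double-factorial ratio. Once it is in hand, $\carea$ and then $\lambda_1^+(m)$ take their stated values, and a standard Stirling estimate yields the asymptotics claimed in Theorem~\ref{th:single:obstacle}. A secondary difficulty worth mentioning is the careful bookkeeping of normalizations between the EKZ formula, the double-cover construction, and the conventions of~\cite{AEZ}.
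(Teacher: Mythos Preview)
Your proposal is correct and follows essentially the same approach as the paper: connectedness via the configuration-space description of the locus, ergodicity inherited from the base stratum, and the Lyapunov exponent via the EKZ formula together with a cover-versus-base comparison of Siegel--Veech constants configuration by configuration (pockets and dumbbells from~\cite{Boissy:configurations}, with values from~\cite{AEZ}), reduced at the end to the combinatorial identity of section~\ref{s:combinatorial:identities}. The only organizational difference is that the paper subtracts twice the genus-zero EKZ relation (where $\Lambda^+=0$) to work with the differences $\Delta\tilde\kappa$ and $\Delta\tilde c_{\mathit{area}}$, which isolates precisely the configurations with nontrivial monodromy of the core curve (an odd number of ramified poles on each side, not ``meeting the ramification locus'') and handles the normalization bookkeeping you anticipate.
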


\begin{Remark}
Note  that  there  is a very important difference between the case of
$m=1$  (corresponding to the classical wind-tree) and the cases $m\ge
2$. Namely, the locus $\cQ^{hyp}(1^{2},-1^{2})$ over $\cQ(1,-1^5)$ is
\textit{nonvarying}:  the Lyapunov exponent $\lambda_1^+=\frac{2}{3}$
for      \textit{all}      flat     surfaces     in     the     locus
$\cQ^{hyp}(1^{2},-1^{2})$.
For  $m\ge  2$  it  is  not  true  anymore.
First of all, for each integer $m \geq 2$, taking
appropriate unramified covering of
degree $m$ of a flat surface in the stratum $\cQ(1^2,-1^2)$
we get a flat surface in the hyperelliptic locus of $\cQ^{hyp}(1^{2m},-1^{2m})$ over $\cQ(1^m, -1^{m+4})$.
Concretely, such surface can be built starting from the original wind-tree
model with rectangles and taking a fundamental domain that is made
of $m$ copies of the unit square.

By construction, the Lyapunov exponent $\lambda_1^+$
of the resulting Teichm\"uller curve in $\cQ^{hyp}(1^{2m},-1^{2m})$,
and, hence, the diffusion rate $\delta$ for the
corresponding wind-tree billiard does not change: $\delta=\lambda_1^+=2/3$.
Furthermore, for $m=2$ we were able to find examples of square tiled surfaces for which
the value is neither the generic value $\delta(2)=8/15=0.5333\ldots$ nor $2/3=0.6666\ldots$
\[
\begin{array}{|l|l|}
\hline
\text{permutations $r$ and $u$} & \lambda^+_1 \\
\hline
\begin{array}{l}
r=(1,2,3,4,5,6,7)(8,9,10,11,12,13,14) \\
u=(1,3,13,8,2,14)(4,6,11,5,10,12)(7,9)
\end{array}
& \frac{20}{33} = 0.6060\ldots \\ \hline
\begin{array}{l}
r=(1,2,3,4,5,6,7,8)(9,10,11,12,13,14) \\
u=(1,2,3,14,9)(4,13)(5,6,7,11,12)(8,10)
\end{array}
& \frac{6}{11} = 0.5454\ldots \\ \hline
\end{array}
\]
See also the example in Appendix~\ref{a:removing:squares:in:the:wind:tree}
with $m=3$.
\end{Remark}
\begin{Question}
What  are  the  extremal  values  of  $\lambda^+_1$  over  all closed
$\PSL$-invariant     suborbifolds     in
a given stratum (given locus)? Same question
for the concrete hyperelliptic    locus
$\cQ^{hyp}(1^{2m},-1^{2m})$  over $\cQ(1^{m},-1^{m+4})$? What are the
shapes of billiards for which these values are achieved (if there are
any wind-tree billiards corresponding to these invariant suborbifolds)?
\end{Question}


\section{From billiards to flat surfaces}
\label{s:From:billiards:to:flat:surfaces}

\subsection{Original wind-tree revisited}
\label{ss:Original:wind:tree:revisited}
Recall that in the classical case of a billiard in a rectangle we can
glue a flat torus out of four copies of the billiard table and unwind
billiard trajectories to flat geodesics on the resulting flat torus.

In  the  case of the wind-tree model we also start from gluing a flat
surface  out  of  four  copies  of  the billiard table. The resulting
surface  is $\Z\!\oplus\!\Z$-periodic with respect to translations by
vectors  of  the  original  lattice.  We  pass  to  the quotient over
$\Z\!\oplus\!\Z$  to get a compact flat surface without boundary. For
the  case  of  the  original  wind-tree  billiard  the resulting flat
surface           $X$           is           represented           at
Figure~\ref{fig:windtree:surface:genus:5}.   It  has  genus  $5$;  it
belongs    to    the    stratum    $\cH(2^4)$    (see    section    3
of~\cite{Delecroix:Hubert:Lelievre} for details).

One of the key statements of~\cite{Delecroix:Hubert:Lelievre} can be
stated as follows.

Let  $\Pi$  be  the original rectangular obstacle, define the corresponding wind-tree
billiard by the same symbol $\Pi$.
Let $X=X(\Pi)$ be the flat
surface  as  in Figure~\ref{fig:windtree:surface:genus:5} constructed
by the wind-tree billiard defined by the obstacle $\Pi$.

Consider the $\SLR$-orbit closure $\cL(X)\subset\cH(2^4)$ of the flat
surface  $X$.  Consider  the  cohomology  classes  $h^\ast, v^\ast\in
H^1(X,\Z)$ Poincar\'e-dual to cycles
\begin{align*}
h&=h_{00}-h_{01}+h_{10}-h_{11}\\
v&=v_{00}-v_{10}+v_{01}-v_{11}
\end{align*}
(see  Figure~\ref{fig:windtree:surface:genus:5})  as  elements of the
fiber  over  the  point  $X\in\cL(X)$  of  the  complex  Hodge bundle
$H^1_{\C}$  over  $\cL(X)$.

\begin{NNTheorem}[\cite{Delecroix:Hubert:Lelievre}]
The diffusion rate in the original wind-tree billiard $\Pi$ coincides
with  the Lyapunov exponent $\lambda(h^\ast)=\lambda(v^\ast)$) of the
complex  Hodge  bundle $H^1_{\C}$ with respect to the Teichm\"uller
geodesic flow on $\cL(X)$.
\end{NNTheorem}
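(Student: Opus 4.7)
The plan is to reduce the planar billiard problem to the dynamics of the translation flow on the compact surface $X$, to identify the displacement of a long billiard trajectory with the evaluation on the corresponding leaf segment of explicit integer cohomology classes, and then to invoke the deviation spectrum of Forni and Zorich together with the genericity result of Chaika--Eskin.

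First I would set up the unfolding. Fixing a direction $\theta$ and a generic starting point, the billiard trajectory in $\Pi$ lifts to a leaf of the directional foliation $\cF_\theta$ on the $\Z^2$-cover $\widetilde X\to X$ constructed from the unfolding: the deck group of $\widetilde X\to X$ is canonically $\Z^2$, and, up to a bounded additive error coming from the two endpoints, the planar displacement of a billiard trajectory of time $t$ equals the element of $\Z^2$ realized by the deck transformation associated with the lifted geodesic segment $\gamma$. By construction of the cover, the induced homomorphism $H_1(X,\Z)\to\Z^2$ is precisely $\gamma\mapsto(\gamma\cdot h,\,\gamma\cdot v)$, so that the two components of the planar displacement coincide with the pairings of $\gamma$ with the integer cohomology classes $h^\ast$ and $v^\ast$ respectively. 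Verifying that this homomorphism is indeed given by intersection with $h$ and $v$, and not by other cycles in $H_1(X,\Z)$, is the most delicate bookkeeping step: one has to analyse the cut-and-paste description of $X$ and check that the alternating signs in the definitions of $h$ and $v$ are exactly those dictated by the four reflections involved in unfolding. This is the main obstacle in the proof.

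Next I would apply the deviation spectrum. For a Lyapunov-generic direction, the Forni--Zorich asymptotics guarantees that the integral of a closed $1$-form representing $c\in H^1(X,\R)$ along a leaf segment of length $L$ grows as $L^{\lambda(c)}$, where $\lambda(c)$ is the Lyapunov exponent of $c$ for the Kontsevich--Zorich cocycle over the $\SLR$-orbit closure $\cL(X)\subset\cH(2^4)$. Since the leaf length $L$ is comparable to the billiard time $t$, the Euclidean diameter of the planar trajectory up to time $t$ grows as $t^{\max(\lambda(h^\ast),\lambda(v^\ast))}$. The horizontal--vertical symmetry of the wind-tree construction exchanges the cycles $h$ and $v$ and commutes with the Kontsevich--Zorich cocycle; hence $\lambda(h^\ast)=\lambda(v^\ast)$, and this maximum is simply their common value. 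The genericity hypothesis on the direction is upgraded from \emph{almost every direction for almost every surface} to \emph{almost every direction for every surface} by invoking the theorem of Chaika--Eskin, which is exactly what allows the conclusion to hold for every choice of the obstacle $\Pi$ rather than merely for almost every $\Pi$.
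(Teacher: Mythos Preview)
The paper does not give its own proof of this theorem: it is stated as a result of Delecroix--Hubert--Leli\`evre and cited from~\cite{Delecroix:Hubert:Lelievre}, with no proof environment following. The only comment the paper adds is Remark~\ref{rm:Chaika:Eskin}, observing that the Chaika--Eskin genericity theorem simplifies part of the original argument --- precisely the upgrade from ``almost every $\Pi$'' to ``every $\Pi$'' that you invoke in your last paragraph.

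Your sketch is a faithful outline of the approach in~\cite{Delecroix:Hubert:Lelievre} as summarized in the paper's strategy section~\ref{ss:strategy}: unfold to the compact surface $X$, identify the planar displacement with the pairing of the orbit segment against the integer classes $h^\ast,v^\ast$, apply the Forni--Zorich deviation spectrum over the orbit closure $\cL(X)$, and use Chaika--Eskin to guarantee Lyapunov-genericity of almost every direction on the specific surface $X(\Pi)$. Your identification of the bookkeeping for the signs in $h$ and $v$ as the delicate step is accurate; this is carried out in detail in~\cite{Delecroix:Hubert:Lelievre}. So there is nothing to compare: your proposal matches the intended argument, and the paper itself defers to the reference.
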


\begin{figure}[htb]
%
   %
\includegraphics{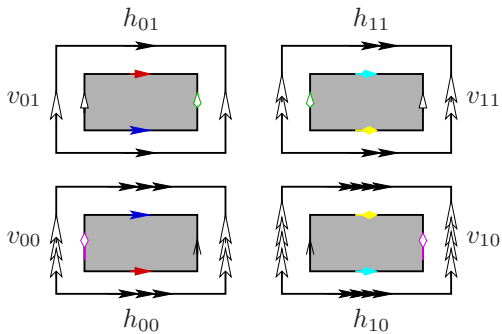}
\begin{picture}(0,0)(28,11)
\put(-32,-110){$h_{00}$}
\put(-32,4){$h_{01}$}
\put(55,-110){$h_{10}$}
\put(55,4){$h_{11}$}
\put(-76,-78){$v_{00}$}
\put(98,-78){$v_{10}$}
\put(-76,-25){$v_{01}$}
\put(98,-25){$v_{11}$}
\end{picture}
\vspace{120bp}
\caption{
\label{fig:windtree:surface:genus:5}
The  flat  surface $X$ obtained as a quotient over $\Z\oplus\Z$ of an
unfolded wind-tree billiard table.
   }
\end{figure}

\begin{Remark}
\label{rm:Chaika:Eskin}
Recent  result  in~\cite{Chaika:Eskin}  proving that for \textit{any}
flat  surface almost all directions on it are Lyapunov-generic allows
to  simplify  part of the argument in the proof of the above Theorem.
In    particular,    it    justifies   that   $\lambda(h^\ast)$   and
$\lambda(v^\ast)$  are  well-defined  for $X$ endowed with almost all
direction.
\end{Remark}

Note    that    any    resulting    flat    surface    $X$    as   in
Figure~\ref{fig:windtree:surface:genus:5}  has  (at  least) the group
$(\Z/2\Z)^3$  as  a  group  of isometries. As three generators we can
choose  the  isometries $\tau_h$ and $\tau_v$ interchanging the pairs
of flat tori with holes in the same rows (correspondingly columns) by
parallel  translations and the isometry $\iota$ acting on each of the
four  tori  with holes as the central symmetry with the center in the
center of the hole.

\begin{figure}[htb]
\centering
%
   %
\includegraphics{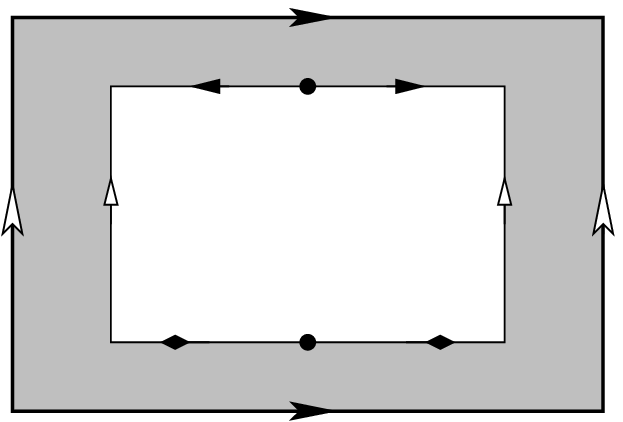}
\includegraphics{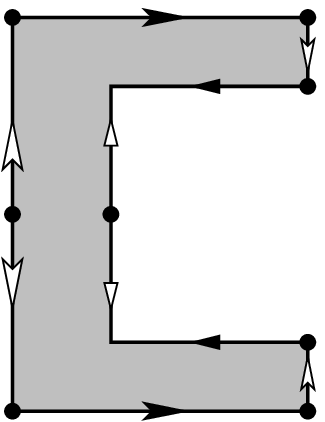}
\begin{picture}(0,0)(0,0)
\put(48,-64){$P_1$}
\put(48,-6){$P_2$}
\put(149,-6){$P_3$}
\put(149,-32){$P_4$}
\put(92,-32){$P_5$}
\put(92,-64){$P_6$}
\end{picture}
\vspace{120bp}
\caption{
\label{fig:windtree:eliptic:cover}
A  surface $\tilde S$ in the hyperelliptic locus $\cQ^{hyp}(1^2,-1^2)$ (on
the  left)  is  a  double  cover  over  the underlying surface $S$ in
$\cQ(1,-1^5)$   (on   the   right)  ramified  at  four  simple  poles
represented by bold dots.
   }
\end{figure}

Consider the quotient $\tilde S$ of the flat surface $\tilde{S}$ over
the  subgroup  $(\Z/2\Z)^2$  of  isometries  spanned  by $\tau_h$ and
$\iota\circ\tau_v$.  The  resulting surface $\tilde S$ as on the left
side  of  the  Figure~\ref{fig:windtree:eliptic:cover} belongs to the
stratum $\cQ(1^2,-1^2)$; in particular, it has genus $1$. The surface
$S$  obtained  as  the quotient of the original flat surface $X$ over
the   entire   group  $(\Z/2\Z)^3$  as  on  the  right  side  of  the
Figure~\ref{fig:windtree:eliptic:cover}   belongs   to   the  stratum
$\cQ(1,-1^5)$;  in  particular, it has genus $0$. Clearly, $\tilde S$
is  a ramified double cover over $S$ with ramification points at four
(out of five) simple poles of the flat surface $S$.

\begin{Lemma}
Consider  the  natural  projection  $p:X\to  \tilde S$. The following
inclusion is valid:
$$
h^\ast\in p^\ast(H^1(\tilde S;\Z{}))\,.
$$
\end{Lemma}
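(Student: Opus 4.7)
The plan is essentially a symmetry argument: verify that $h \in H_1(X; \Z)$ is invariant under the deck group $G = \langle \tau_h, \iota \circ \tau_v \rangle \cong (\Z/2\Z)^2$ of the covering $p: X \to \tilde S$, deduce by Poincar\'e duality that $h^\ast$ is $G$-invariant, and invoke the standard identification of $p^\ast H^1(\tilde S; \Z)$ with $H^1(X; \Z)^G$ for a finite branched covering of closed surfaces to conclude.

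The substantive computation is the invariance check. The translations $\tau_h$ and $\tau_v$ swap columns and rows of the four tori-with-hole, so $\tau_h(h_{ij}) = h_{(1-i)j}$ and $\tau_v(h_{ij}) = h_{i(1-j)}$. The symmetry $\iota$ acts on each torus-with-hole as the central symmetry about its puncture; this is an orientation-preserving rotation by $\pi$ that maps each loop $h_{ij}$ to the same geometric loop traversed in the opposite direction, so $\iota(h_{ij}) = -h_{ij}$. Then
\[
\tau_h(h) = h_{10} - h_{11} + h_{00} - h_{01} = h,
\]
\[
(\iota \circ \tau_v)(h) = -h_{01} + h_{00} - h_{11} + h_{10} = h,
\]
so $h$ is $G$-invariant. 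Since every element of $G$ is an orientation-preserving diffeomorphism, Poincar\'e duality is $G$-equivariant, and $h^\ast \in H^1(X; \Z)$ is therefore also $G$-invariant.

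I expect the only potentially delicate point to be the promotion of $G$-invariance into the desired pullback statement $h^\ast \in p^\ast H^1(\tilde S; \Z)$: for an unramified cover the identification $p^\ast H^1(\tilde S; \Z) = H^1(X; \Z)^G$ is automatic, but $p$ is ramified over four points of $\tilde S$. This is easily handled by representing $h^\ast$ by a $G$-invariant closed $1$-form $\omega$ on $X$, which descends smoothly to $\tilde S$ once one checks (using the local model $z \mapsto z^2$ at each branch point) that $\omega$ has no period around small loops encircling the ramification points on $X$; the integrality of the periods of the descended form then follows from the integrality of $\omega$ on cycles in $\tilde S$ disjoint from the branch locus.
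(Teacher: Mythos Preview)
Your argument is correct and follows exactly the paper's approach: check that $h$ is invariant under $\Gamma = \langle \tau_h,\, \iota\circ\tau_v \rangle$, pass to the Poincar\'e dual, and conclude that $h^\ast$ is pulled back from $\tilde S = X/\Gamma$. You are, if anything, more careful than the paper (which asserts the last implication without comment on the ramification); the only imprecision is in your integrality step --- a loop in $\tilde S$ lifts only to a \emph{path} in $X$, so your argument a priori yields periods in $\tfrac{1}{4}\Z$ rather than $\Z$ --- but this is easily repaired (for instance by observing directly that $p^{*}\mathrm{PD}(p_{*}h_{00}) = h^{*}$, since $p^{!}p_{*}h_{00} = \sum_{g\in\Gamma} g_{*}h_{00} = h$), and in any case rational coefficients already suffice for the subsequent Corollary.
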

\begin{proof}
Recall that
$$
h=h_{00}-h_{01}+h_{10}-h_{11}\,,
$$
where    the    cycles   $h_{ij}$,   $i,j=0,1$   are   indicated   in
Figure~\ref{fig:windtree:surface:genus:5}. Now note that
$$
\tau_h(h_{00})=h_{10}\qquad    \iota\circ\tau_v(h_{00})=-h_{01}\qquad
\iota\circ\tau_v\circ\tau_h(h_{00})=-h_{11}\qquad\,.
$$
Thus,  the  cycle  $h$  is invariant under the action of the commutative
group  $\Gamma$  spanned  by  $\tau_h$  and  $\iota\circ\tau_v$. This
implies  that  the Poincar\'e-dual cocycle $h^\ast$ is also invariant
under the action of $\Gamma$ and, hence, is induced from some cocycle
in cohomology of $\tilde S=X/\Gamma$.
\end{proof}

\begin{Corollary}
\label{cor:lex:m:equals:1}
Consider  the  orbit  closure  $\cM(\tilde  S)$  of  the flat surface
$\tilde S(\Pi) \in \cQ(1^2,-1^2)$. The diffusion rate in the original
wind-tree  billiard  $\Pi$  in  vertical direction coincides with the
positive  Lyapunov exponent $\lambda_1^+$ of the complex Hodge bundle
$H^1_{\C}$  with  respect  to  the  Teichm\"uller  geodesic flow on
$\cM(\tilde S)$.
\end{Corollary}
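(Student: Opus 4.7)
The plan is to combine the Delecroix--Hubert--Leli\`evre theorem recalled just above with the preceding lemma and the fact that $\tilde S$ has genus $1$. First, the theorem from~\cite{Delecroix:Hubert:Lelievre} identifies the diffusion rate of $\Pi$ (in the vertical, and in fact in almost every, direction) with $\lambda(h^\ast)$, the Lyapunov exponent of the integer cocycle $h^\ast\in H^1(X,\Z)$ along the Teichm\"uller flow on $\cL(X)\subset\cH(2^4)$. Remark~\ref{rm:Chaika:Eskin} (the Chaika--Eskin theorem) ensures that this exponent is well defined at the fixed flat surface $X(\Pi)$ for almost every direction. It thus suffices to identify $\lambda(h^\ast)$ with the top Lyapunov exponent $\lambda_1^+$ of the Hodge bundle over $\cM(\tilde S)$.

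Next, I would exploit the equivariance of the quotient $p\colon X\to \tilde S$ with respect to the $\SLR$-action. Since $\Gamma=\langle \tau_h,\iota\circ\tau_v\rangle$ acts by affine isometries commuting with the Teichm\"uller flow, $p$ descends to an $\SLR$-equivariant surjection $\cL(X)\to\cM(\tilde S)$, and the pullback $p^\ast$ embeds the Hodge bundle over $\cM(\tilde S)$ as a Gauss--Manin-invariant symplectic sub-bundle of the Hodge bundle over $\cL(X)$, scaling the Hodge norm by the constant factor $\sqrt{\deg p}$. By the preceding lemma, $h^\ast=p^\ast(\tilde h^\ast)$ for some integer class $\tilde h^\ast\in H^1(\tilde S,\Z)$, and the equivariance gives $\lambda(h^\ast)=\lambda(\tilde h^\ast)$, where the latter Lyapunov exponent is computed for the Hodge bundle over $\cM(\tilde S)$.

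Finally, since $\tilde S$ has genus $1$, the complex Hodge bundle $H^1_+$ over $\cM(\tilde S)$ has complex rank one, and its Lyapunov spectrum consists of the single pair $\pm\lambda_1^+$. The class $\tilde h^\ast$ is a nonzero, locally constant integer section of $H^1_\R$, so it lies generically off the measurably-defined stable Oseledets line, which gives $\lambda(\tilde h^\ast)=\lambda_1^+$ almost everywhere. Chaining the three identifications proves the corollary. The main obstacle is precisely this last step: ruling out that a rational cocycle lies almost everywhere in the one-dimensional stable direction. One can overcome it either by a direct observation that a fixed rational line in $H^1_\R$ and a measurably-defined Oseledets line almost never coincide, or by a monodromy argument on $\cM(\tilde S)$ showing that the Kontsevich--Zorich cocycle is sufficiently rich (its Zariski closure being all of $\SLR$) so that every nonzero rational vector is Oseledets-generic.
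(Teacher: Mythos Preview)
Your proof is correct and follows the same three-step structure as the paper: invoke the Delecroix--Hubert--Leli\`evre theorem, use the preceding lemma together with the $\SLR$-equivariance of $p$ to pass from $\lambda(h^\ast)$ on $\cL(X)$ to $\lambda(\tilde h^\ast)$ on $\cM(\tilde S)$, and then use $g(\tilde S)=1$ to reduce to a single pair $\pm\lambda_1^+$.

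The only notable difference is in the last step. You treat ``$\lambda(\tilde h^\ast)\neq -\lambda_1^+$'' as the main obstacle and propose either a genericity argument (a fixed rational line almost never coincides with the measurable stable Oseledets line) or a monodromy argument. The paper disposes of this in one line: since $\tilde h^\ast$ is a nonzero \emph{integer} covector, its Lyapunov exponent cannot be strictly negative, hence $\lambda(\tilde h^\ast)=\lambda_1^+$. The point is that parallel transport under the Gauss--Manin connection preserves the integer lattice, which is discrete; combined with recurrence this forbids exponential decay of the norm. This is both shorter and more robust than invoking Oseledets genericity or Zariski-density of the monodromy, and it is worth internalizing as the standard device for this kind of argument.
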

\begin{proof}
From the Theorem of Delecroix--Hubert--Leli\`evre cited above we know
that   the  diffusion  rate  coincides  with  the  Lyapunov  exponent
$\lambda(h^\ast)$.  By  the  previous  Lemma  $h^\ast=p^\ast(\alpha)$
where  $\alpha\in  H^1(\tilde  S;\Z)$.  It  is  immediate to see that
$\lambda(h^\ast)=\lambda(\alpha)$, where $\lambda(\alpha)$ is already
the Lyapunov exponent of the of the complex Hodge bundle $H^1_{\C}$
with  respect  to the Teichm\"uller geodesic flow on $\cM(\tilde S)$.
Since   $g(\tilde   S)=1$  the  corresponding  cocycle  has  Lyapunov
exponents  $\pm\lambda_1^+$.  Since  $\alpha$ is an integer covector,
$\lambda(\alpha)$     cannot     be    strictly    negative.    Hence
$\lambda(\alpha)=\lambda^+_1$.
\end{proof}

\subsection{Elliptic locus and diffusion in the generalized
wind-tree billiard.}
\label{ss:elliptic:locus}
The hyperelliptic locus  $\cQ^{hyp}(1^{2m},-1^{2m})$   in   $\cQ(1^{2m},-1^{2m})$  is
obtained  by  the following construction. For any flat surface $S$ in
$\cQ(1^m,-1^{m+4})$  consider  all  possible  quadruples of unordered
simple  poles.  For  each quadruple construct a ramified double cover
with  four  ramification  points  exactly  at the chosen quadruple of
points.  By  construction  the  induced  flat  surface belongs to the
stratum  $\cQ(1^{2m},-1^{2m})$ in genus $1$. Considering all possible
flat  surfaces  $S$  in  $\cQ(1^m,-1^{m+4})$  and all covers over all
quadruples     of     simple     poles     we     get    the    locus
$\cQ^{hyp}(1^{2m},-1^{2m})$ over $\cQ(1^{m},-1^{m+4})$.

It  is  immediate  to see that when the obstacle is a rectangle as in
the original wind-tree billiard the surface $\tilde S$ constructed in
section~\ref{ss:Original:wind:tree:revisited} belongs to the hyperelliptic
locus   $\cQ^{hyp}(1^2,-1^2)$   over   the   stratum   $\cQ(1,-1^5)$.
Similarly, when the obstacle has $4m$ corners with the angle $\pi/2$,
the   analogous   surface   $\tilde   S$   (as   on   the   left   of
Figure~\ref{fig:swiss:cross})   belongs   to   the   hyperelliptic   locus
$\cQ^{hyp}(1^{2m},-1^{2m})$ over $\cQ(1^{m},-1^{m+4})$.

\begin{figure}[htb]
\centering
%
   %
\includegraphics{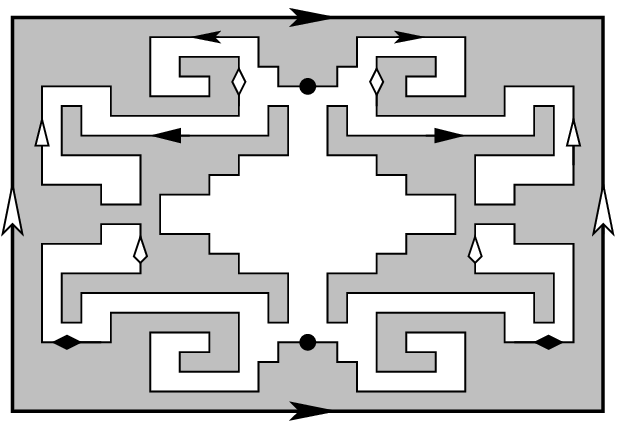}
\includegraphics{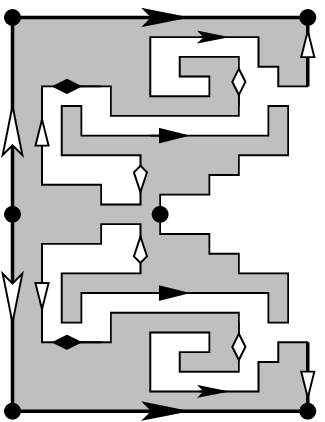}
\vspace{120bp}
\caption{
\label{fig:swiss:cross}
A  surface $\tilde S$ in the hyperelliptic locus $\cQ^{hyp}(1^{2m},-1^{2m})$
is   a   double   cover   over   the   underlying   surface   $S$  in
$\cQ(1^{m},-1^{m+4})$  branched  at the four simple poles represented
by bold dots.
   }
\end{figure}

The                            arguments                           of
Delecroix--Hubert--Leli\`evre~\cite{Delecroix:Hubert:Lelievre} extend
to the more general case of obstacles $\Pi$ symmetric with respect to
vertical  and  horizontal  axes  with  arbitrary  numbers  of  angles
$\frac{\pi}{2}$, basically, line by line with an extra simplification
due       to       results~\cite{Chaika:Eskin}      mentioned      in
Remark~\ref{rm:Chaika:Eskin}. Applying exactly the same consideration
as    above   we   prove   the   following   statement   generalizing
Corollary~\ref{cor:lex:m:equals:1} to arbitrary $m\in\N$.

Let  $\Pi$  be  a  connected  obstacle having $4m$ corners with angle
$\frac{\pi}{2}$  and  $4(m-1)$  corners  with  angle $\frac{3\pi}{2}$
aligned  in  such  way  that all its sides are vertical or horizontal
(see     the    white    domain    in    the    left    picture    in
Figure~\ref{fig:swiss:cross}).  Suppose that $\Pi$ is symmetric under
reflections over some vertical and some horizontal lines.

\begin{Proposition}
\label{pr:diffusion:as:Lyapunov:exponent}
Consider     the     orbit    closure    $\cM(\tilde    S)    \subset
\cQ^{hyp}(1^{2m},-1^{2m})$ of the flat surface $\tilde S(\Pi)$ in the
ambient  hyperelliptic  locus  $\cQ^{hyp}(1^{2m},-1^{2m})$.  The diffusion
rate  in the wind-tree billiard with periodic obstacles $\Pi$ aligned
with  the  lattice  coincides  with  the  positive  Lyapunov exponent
$\lambda_1^+$   of   the   complex  Hodge  bundle  $H^1_{\C}$  over
$\cM(\tilde  S)$  with  respect to the Teichm\"uller geodesic flow on
$\cM(\tilde S)$.
\end{Proposition}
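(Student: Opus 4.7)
My plan is to imitate the proof of Corollary~\ref{cor:lex:m:equals:1} verbatim, replacing the rectangle by the general obstacle $\Pi$, and to invoke the extension of the Delecroix--Hubert--Leli\`evre reformulation from~\cite{Delecroix:Hubert:Lelievre} to the family $\cB(m)$. I describe the four steps.

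\emph{Unfolding.} I reflect the billiard table across its horizontal and vertical sides to obtain a $\Z\oplus\Z$-periodic translation surface in the plane, and pass to the $\Z\oplus\Z$-quotient to obtain a compact translation surface $X=X(\Pi)$. Each corner of $\Pi$ of angle $\pi/2$ (as seen from the obstacle) becomes, after taking four copies, a cone point of total angle $6\pi$, hence a zero of order $2$ of the associated abelian differential; each reflex corner becomes a regular point. Thus $X\in\cH(2^{4m})$, and by construction $X$ carries the $(\Z/2\Z)^3$-action generated by $\tau_h,\tau_v,\iota$, defined exactly as for the rectangle.

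\emph{From diffusion to a Lyapunov exponent of $X$.} I invoke the extension of the Delecroix--Hubert--Leli\`evre theorem to the family $\cB(m)$: the diffusion rate equals $\lambda(h^\ast)$, where $h^\ast\in H^1(X,\Z)$ is the Poincar\'e dual of the alternating cycle $h=h_{00}-h_{01}+h_{10}-h_{11}$, with $h_{ij}$ the horizontal loop of the $(i,j)$-th torus-with-holes. The cocycle $h^\ast$ records the signed count of obstacles crossed horizontally, so its definition is insensitive to the shape of $\Pi$; what changes with $m$ is the genus of $X$ and the cellular decomposition. Chaika--Eskin~\cite{Chaika:Eskin}, as in Remark~\ref{rm:Chaika:Eskin}, guarantees that $\lambda(h^\ast)$ is well-defined at the specific surface $X$ for almost every direction, which removes the most delicate step of the original argument.

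\emph{Descent and genus-$1$ conclusion.} Let $\Gamma=\langle\tau_h,\iota\circ\tau_v\rangle\simeq(\Z/2\Z)^2$ and $p\colon X\to \tilde S:=X/\Gamma$. A direct inspection of the ramification identifies $\tilde S$ as a surface in $\cQ^{hyp}(1^{2m},-1^{2m})$ covering an underlying sphere in $\cQ(1^m,-1^{m+4})$: the $4m$ zeros of order $2$ of $X$ pair up to yield $2m$ simple zeros of the quadratic differential on $\tilde S$, while the four fixed points of $\iota\circ\tau_v$ on the symmetry axes give the four ramified simple poles. The same identities
\[
\tau_h(h_{00})=h_{10},\quad \iota\circ\tau_v(h_{00})=-h_{01},\quad \iota\circ\tau_v\circ\tau_h(h_{00})=-h_{11}
\]
as in Corollary~\ref{cor:lex:m:equals:1} show $h$, and hence $h^\ast$, is $\Gamma$-invariant, so $h^\ast=p^\ast(\alpha)$ for some $\alpha\in H^1(\tilde S,\Z)$. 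Since $\tilde S$ has genus $1$, the Hodge bundle over $\cM(\tilde S)$ has Lyapunov spectrum $\{\pm\lambda_1^+\}$; pullback preserves Lyapunov exponents, so $\lambda(h^\ast)=\lambda(\alpha)$, and the integrality of $\alpha$ forces $\lambda(\alpha)\geq 0$, hence $\lambda(\alpha)=\lambda_1^+$.

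The serious work lies not in the last step (which is formal) but in the second: I must verify that the Delecroix--Hubert--Leli\`evre identification of the diffusion rate with $\lambda(h^\ast)$ survives the replacement of the rectangle by an arbitrary right-angled symmetric obstacle. This is what the authors mean by \emph{``extends line by line''}; concretely it is a cellular-decomposition bookkeeping check that the horizontal displacement cocycle on $X(\Pi)$ is still represented by the same alternating sum $h_{00}-h_{01}+h_{10}-h_{11}$, independently of the extra reflex corners of $\Pi$. Once this is carried out, no further new ideas beyond those in~\cite{Delecroix:Hubert:Lelievre} and~\cite{Chaika:Eskin} are needed.
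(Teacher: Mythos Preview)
Your proposal is correct and follows exactly the route the paper takes: the paper's own ``proof'' of this Proposition is nothing more than the sentence that the Delecroix--Hubert--Leli\`evre argument ``extends basically line by line'' with the Chaika--Eskin simplification, together with ``applying exactly the same consideration as above'' (i.e., the proof of Corollary~\ref{cor:lex:m:equals:1}), and you have spelled out precisely those steps. One small bookkeeping slip in your descent paragraph: $\tilde S$ carries $2m$ simple poles (not four), and the ``four ramified simple poles'' are the branch points of the double cover $\tilde S\to S$ sitting on the base sphere $S\in\cQ(1^m,-1^{m+4})$, not fixed points of $\iota\circ\tau_v$ on $X$; this is cosmetic and does not affect the argument.
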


\subsection{Orbit closures}
\label{ss:Orbit:closures}

Consider  the original wind-tree billiard with rectangular obstacles.
Such  a billiard is described by five real parameters: by two lengths
of  the  sides of the external rectangle defining the lattice; by two
lengths  of  the  sides  of  the  inner  rectangle represented by the
obstacle  and  by  the  angle  defining  the direction of trajectory,
see Figure~\ref{fig:windtree:eliptic:cover}.
Varying  continuously  these parameters we obtain a continuous family
of billiards.

Starting     with     a     more     general     obstacle    as    in
Figure~\ref{fig:swiss:cross}    having    $4m$   corners   of   angle
$\frac{\pi}{2}$,  $4(m-1)$  corners  of  angle  $\frac{3\pi}{2}$  and
symmetric with respect to vertical and horizontal axes of symmetry we
get  an  analogous  continuous family of billiards which we denote by
$\cB(m)$. It is immediate to check that
\begin{equation}
\label{eq:dim:cB}
\dim_{\R}\cB(m)=2m+2
\end{equation}
Here the  direction  of  the  billiard  flow is not considered
as a parameter of the family $\cB$. For the original
wind-tree    billiard   with   rectangular   obstacles   this   gives
$\dim_{\R}\left(\cB(1)\times \mathbb{S}^1/(\Z/2\Z)^2\right)=4+1=5$, as we have already seen.

\begin{Proposition}
\label{pr:almost:any:closure}
For  Lebesgue-almost every directional billiard in $\Pi\in\cB(m)$ the
$\GLR$-orbit  closure  of  $S(\Pi)$ in $\cQ(1^m,-1^{m+4})$ coincides
with the entire ambient stratum $\cQ(1^m,-1^{m+4})$.
\end{Proposition}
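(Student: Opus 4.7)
The plan follows the transversality scheme of~\cite{AEZ} for right-angled planar billiards, adapted to the $(2m+2)$-parameter family $\cB(m)$. I would show that $\Pi\mapsto S(\Pi)$ is a local diffeomorphism onto an open subset of a \emph{real locus} $\cQ^{\R}\subset\cQ(1^m,-1^{m+4})$ of real dimension $2m+2$, and that this real locus meets every proper affine invariant submanifold in strictly smaller dimension.

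Since every side of $\Pi$ is horizontal or vertical, each saddle connection of $S(\Pi)$ is horizontal or vertical as well. I would choose a basis $\gamma_1,\dots,\gamma_{2m+2}$ of the relevant relative homology of $S(\Pi)$ consisting of such saddle connections. The $2m+2$ parameters of $\Pi$ (two lattice periods plus the $2m$ independent symmetric side-lengths of the obstacle) then express the periods $\int_{\gamma_i}\sqrt{q}$ as a real-linear function of the parameters. The locus $\cQ^{\R}$ of flat surfaces whose $\gamma_i$-periods respect the real/imaginary pattern dictated by the horizontal/vertical type is a real-analytic submanifold of real dimension $\dim_{\C}\cQ(1^m,-1^{m+4})=2m+2$, so once one checks that the period map is nondegenerate at a generic $\Pi$, the assignment $\Pi\mapsto S(\Pi)$ is a local diffeomorphism onto $\cQ^{\R}$.

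By~\cite{Eskin:Mirzakhani:Mohammadi}, every $\PGL$-orbit closure $\cM$ is an affine invariant submanifold, which locally in period coordinates is a real-linear subspace $W\subset\C^{2m+2}$ invariant under the diagonal $\SL$-action on each coordinate. Since the standard $\SL$-representation on $\R^{2}$ is irreducible, such a $W$ must take the form $W=U\otimes_{\R}\C$ for some $U\subseteq\R^{2m+2}$. A short computation then gives
\[
\dim_{\R}(W\cap\cQ^{\R})=\dim(U\cap\R^{I_H})+\dim(U\cap\R^{I_V}),
\]
where $I_H,I_V$ index the horizontal and vertical elements of the basis; when $U$ is a proper subspace, this dimension is strictly less than $2m+2$. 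Hence for every proper $\cM\subsetneq\cQ(1^m,-1^{m+4})$ the preimage $\{\Pi\in\cB(m):S(\Pi)\in\cM\}$ is real-analytic of positive codimension in $\cB(m)$ and has Lebesgue measure zero. Countability of affine invariant submanifolds in a given stratum (another consequence of~\cite{Eskin:Mirzakhani:Mohammadi}) then reduces the full exceptional set to a countable union of measure-zero sets, proving the proposition.

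The main obstacle is the nondegeneracy check in the second paragraph: namely, that the $2m+2$ side-length and lattice parameters really do produce linearly independent horizontal and vertical periods at a generic $\Pi$. This is immediate for $m=1$ and is verified in general by exhibiting an explicit ``standard'' symmetric obstacle $\Pi_{0}\in\cB(m)$ at which the Jacobian of the period map can be computed by inspection, exactly as in the analogous argument of~\cite{AEZ}.
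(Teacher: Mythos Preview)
Your approach is correct but follows a genuinely different route from the paper's. The paper (via Lemma~\ref{lm:transversality}, modeled on Proposition~3.2 of~\cite{AEZ}) adjoins the direction $\theta$ as an extra parameter and shows that, after rotating $S(\Pi)$ by $\theta$, the tangent space to $\cB(m)\times\{\theta\}$ surjects onto the \emph{unstable} subspace $H^1(\hat S;\R)$ of the Teichm\"uller flow: the real parts of the periods become $\pm 2\cos\theta\,|P_iP_{i+1}|$ or $\pm 2\sin\theta\,|P_iP_{i+1}|$, which are independent exactly when $\theta\notin\frac{\pi}{2}\Z$. This transversality to the stable foliation is then fed into the dynamical machinery of~\cite{AEZ} to conclude.

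You instead keep $\theta=0$, so the family sits in the totally real locus $\cQ^{\R}$, and you replace the transversality-plus-flow argument by a static dimension count: for a proper orbit closure with tangent space $W=U\otimes_{\R}\C$, the subspaces $U\cap\R^{I_H}$ and $U\cap\R^{I_V}$ are independent inside $U$ (since $\R^{I_H}\oplus\R^{I_V}=\R^{2m+2}$), giving $\dim_{\R}(W\cap\cQ^{\R})\le\dim U<2m+2$; countability of orbit closures then finishes. The trade-off is that the paper's route is lighter on structure theory---it never needs the decomposition $W=U\otimes_{\R}\C$ (which is really Avila--Eskin--M\"oller/Wright rather than~\cite{Eskin:Mirzakhani:Mohammadi}) or the countability of invariant suborbifolds---while your route avoids the auxiliary $\theta$-parameter and the dynamical equidistribution step, making the exceptional set transparently a countable union of lower-dimensional real-analytic sets. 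The nondegeneracy check you flag as the ``main obstacle'' is exactly the same computation the paper performs: the side lengths $|P_iP_{i+1}|$ are the basic half-periods on the orienting double cover, so the Jacobian is the identity up to signs.
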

\begin{proof}
The     Proposition     is    a    straightforward    corollary    of
Lemma~\ref{lm:transversality} below.
\end{proof}

The  following  Lemma  is  completely  analogous  to  Proposition~3.2
in~\cite{AEZ}.

\begin{Lemma}
\label{lm:transversality}
Consider        the        canonical       local        embedding
$$
\cB(m)\times \left(\mathbb{S}^1/(\Z/2\Z)^2\right)\hookrightarrow\cQ(1^m,-1^{m+4})\,.
$$
For    almost    all    pairs    $(\Pi,\theta)$    in   $\cB(m)\times
\left(\mathbb{S}^1/(\Z/2\Z)^2\right)$  the  projection of the tangent
space                                        $T_\ast\big(\cB(m)\times
\left(\mathbb{S}^1/(\Z/2\Z)^2\right)\big)$  to  the unstable subspace
of the Teichm\"uller geodesic flow is a surjective map.
\end{Lemma}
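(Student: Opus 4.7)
The plan is to compute the differential of the canonical embedding $\cB(m)\times(\mathbb{S}^1/(\Z/2\Z)^2)\hookrightarrow\cQ(1^m,-1^{m+4})$ at a generic point $(\Pi,\theta)$ in period coordinates and to verify that its image, projected onto the unstable subspace of the Teichm\"uller geodesic flow, is surjective.

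First I would recall that in period coordinates $H^1(\hat S,\hat\Sigma;\C)^-$, where $\hat S$ denotes the orientation double cover of $S(\Pi)$ and $\hat\Sigma$ the preimage of the singular set, the Teichm\"uller flow acts by expanding the real part and contracting the imaginary part of periods. So the unstable subspace is $H^1(\hat S,\hat\Sigma;\R)^-$, of real dimension $2m+2$, which exactly matches $\dim_\R\cB(m)$ by~\eqref{eq:dim:cB}.

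Next I would use the bilateral symmetry of $\Pi$ to split the $2m+2$ parameters of $\cB(m)$ into $(m+1)$ horizontal length parameters $h_1,\ldots,h_{m+1}$ and $(m+1)$ vertical length parameters $v_1,\ldots,v_{m+1}$: one set records the $x$-coordinates of the vertical sides of $\Pi$ together with the horizontal period of the lattice, the other the $y$-coordinates and the vertical period. Since $S(\Pi)$ decomposes into rectangles whose sides are aligned with the coordinate axes, differentiation in $h_j$ produces a purely real period vector $u_j\in H^1(\hat S,\hat\Sigma;\R)^-$, and differentiation in $v_k$ produces a purely imaginary vector $iw_k$ with $w_k\in H^1(\hat S,\hat\Sigma;\R)^-$.

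The main obstacle is establishing that $\{u_1,\ldots,u_{m+1},w_1,\ldots,w_{m+1}\}$ is a linearly independent family in $H^1(\hat S,\hat\Sigma;\R)^-$ for almost every $\Pi$. To handle this I would use the explicit cellular decomposition of $S(\Pi)$ coming from the billiard unfolding: each length parameter of $\cB(m)$ scales precisely one equivalence class of saddle connections of $S(\Pi)$ and fixes the others. Writing the derivative of the embedding in a basis of $H^1(\hat S,\hat\Sigma;\R)^-$ dual to these saddle connections yields an almost diagonal matrix, so non-degeneracy holds on a Zariski-open subset of $\cB(m)$. This is exactly the local computation carried out in Proposition~3.2 of~\cite{AEZ}, which goes through essentially verbatim in the present symmetric right-angled setting.

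Finally I would bring in the rotation parameter to unmix the real and imaginary contributions. The rotation $R_{-\theta}$ acts on periods by multiplication by $e^{-i\theta}$, so at $(\Pi,\theta)$ the real parts of the images of $\partial/\partial h_j$ and $\partial/\partial v_k$ become $\cos(\theta)u_j$ and $\sin(\theta)w_k$, respectively. For $\theta\notin\tfrac{\pi}{2}\Z$, both factors are nonzero, and combined with the linear independence of $\{u_j,w_k\}$ this produces $2m+2$ linearly independent vectors in $H^1(\hat S,\hat\Sigma;\R)^-$. Hence the projection of $T_{(\Pi,\theta)}\big(\cB(m)\times(\mathbb{S}^1/(\Z/2\Z)^2)\big)$ onto the unstable subspace is surjective for Lebesgue-almost every $(\Pi,\theta)$; notably, the $\partial/\partial\theta$ direction is not even needed for the conclusion.
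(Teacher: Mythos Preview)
Your proposal is correct and follows essentially the same approach as the paper: identify period coordinates via the orienting double cover, observe that the horizontal and vertical length parameters of the billiard table contribute respectively to the real and imaginary parts of the periods, and then use the rotation by $\theta$ to see that for $\theta\notin\tfrac{\pi}{2}\Z$ the projection to the real (unstable) part becomes surjective by the dimension count $\dim_{\R}\cB(m)=2m+2=\dim_{\C}\cQ(1^m,-1^{m+4})$. The only presentational difference is that the paper carries out the explicit computation for $m=1$ (where $\hat S\in\cH(2)$ and the broken line $P_1P_2\cdots$ directly yields the half-periods) and then invokes the dimension match for general $m$, whereas you argue uniformly in $m$ and make the ``almost diagonal'' structure of the derivative explicit.
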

\begin{proof}
Let  us  first  prove  the  statement  for  $m=1$  and  then make the
necessary  adjustments  for  the most general case. Consider a broken
line  following the upper part of the polygon as on the right side of
figure~\ref{fig:windtree:eliptic:cover}  starting  at the point $P_0$
and  finishing  at  the  corner  $P_4$.  Turn the figure by the angle
$\theta$.  Consider  the  associated  flat surface $S\in\cQ(1,-1^5)$.
Consider  the canonical orienting double cover $\hat S\in\cH(2)$. The
six  Weierstrass points of the cover correspond to the corners of our
polygonal pattern and to the two bold points on the horizontal axe of
symmetry. Thus the vectors of the resulting broken line considered as
complex numbers are exactly the basic half-periods of the holomorphic
1-form  $\omega$  corresponding  to a standard basis of cycles on the
hyperelliptic    surface    $\hat(S)$    (see    sections   3.1   and
3.3~in~\cite{AEZ}  for  more  details). The first cohomology $H^1(\hat S;\C)$
serve  as  local  coordinates in $\cQ(1,-1^5)$. The components of the
projection  of  the vector of periods to the space $H^1(\hat S;\R{})$
are of the form
$$
\pm 2\sin(\phi)|P_i P_{i+1}|\quad\text{or}\quad
\pm 2\cos(\phi)|P_i P_{i+1}|\quad\text{ where } i=1,\dots,4\,.
$$
Thus,  for  $\phi$  different from an integer multiple of $\pi/2$ the
composition map
$$
T_\ast\big(\cB(m)\times
\left(\mathbb{S}^1/(\Z/2\Z)^2\right)\big)\to
H^1(\hat S;\R{})
$$
is a surjective map.

It  is clear from the proof that to extend it from $m=1$ to arbitrary
$m\in\N$ it is sufficient to show that the real dimension of $\cB(m)$
coincides   with   the   complex   dimension  of  $\cQ(1^m,-1^{m+4})$.
Recalling~\eqref{eq:dim:cB}   and   the  classical  formula  for  the
dimension    of   a   stratum   $\cQ(d_1,\dots,d_n)$   of   quadratic
differentials
$$
\dim_{\C}\cQ(d_1,\dots,d_n)=2g+n-2
$$
we conclude that
$$
\dim_{\R}\cB(m)=2m+2=
\dim_{\C}\cQ(1^m,-1^{m+4})
$$
which completes the proof of the Lemma.
\end{proof}

Combining        Propositions~\ref{pr:diffusion:as:Lyapunov:exponent}
and~\ref{pr:almost:any:closure}      with      the      result     of
Chaika--Eskin~\cite{Chaika:Eskin}  telling  that for any flat surface
almost  all  directions are Lyapunov-generic, we obtain the following
Corollary,      which      proves      the      first     part     of
Theorem~\ref{th:single:obstacle}.

\begin{Corollary}
For  Lebesgue-almost every directional billiard in $\Pi\in\cB(m)$ the
diffusion       rate       in       almost       all       directions
$\theta\in\left(\mathbb{S}^1/(\Z/2\Z)^2\right)$   in   the  wind-tree
billiard   $\Pi$   coincides  with  the  positive  Lyapunov  exponent
$\lambda_1^+$  of  the  complex  Hodge  bundle  $H^1_{\C}$ over the
hyperelliptic  locus  $\cQ^{hyp}(1^{2m},-1^{2m})$ with respect to the
Teichm\"uller geodesic flow on this locus.
\end{Corollary}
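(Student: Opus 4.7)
The plan is to assemble the three ingredients highlighted just before the Corollary statement. First I would fix a billiard $\Pi\in\cB(m)$ in the full-measure set produced by Proposition~\ref{pr:almost:any:closure}, so that the $\GLR$-orbit closure of the genus-zero flat surface $S(\Pi)$ fills the entire stratum $\cQ(1^m,-1^{m+4})$. This already pins down the ``horizontal'' part of the orbit closure; what remains is to pass to the double cover $\tilde S(\Pi)$ and then to extract the diffusion rate via a Lyapunov-genericity argument.

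The next step is to promote the orbit-closure statement from the base $S(\Pi)$ to the cover $\tilde S(\Pi)\in\cQ^{hyp}(1^{2m},-1^{2m})$. The double-cover construction of Section~\ref{ss:elliptic:locus} is $\GLR$-equivariant—it depends only on the choice of an unordered quadruple of simple poles, while $\GLR$ acts through the flat structure—so the image of the orbit closure $\cM(\tilde S(\Pi))$ under the forgetful projection $\cQ^{hyp}(1^{2m},-1^{2m})\to\cQ(1^m,-1^{m+4})$ is the entire target stratum. Using the connectedness and $\PGL$-invariance of the hyperelliptic locus asserted in Theorem~\ref{th:lambda1:as:ratio:of:double:factorials}, together with the fact that the forgetful projection has finite fibers (one point for each unordered choice of four simple poles out of $m+4$), I would conclude that $\cM(\tilde S(\Pi))$ coincides with the full locus $\cQ^{hyp}(1^{2m},-1^{2m})$.

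With this orbit closure identified, I would invoke the Chaika--Eskin theorem applied to the specific surface $\tilde S(\Pi)$: almost every direction on it is Lyapunov-generic, so any integer cocycle has a well-defined Lyapunov exponent equal to the corresponding Lyapunov exponent of the Kontsevich--Zorich cocycle over $\cM(\tilde S(\Pi))$. Then Proposition~\ref{pr:diffusion:as:Lyapunov:exponent} identifies the diffusion rate of the wind-tree billiard $(\Pi,\theta)$ with the positive Lyapunov exponent $\lambda_1^+$ of the complex Hodge bundle $H^1_{\C}$ over this orbit closure. Since $\cQ^{hyp}(1^{2m},-1^{2m})$ lives in genus one, there is a single positive exponent $\lambda_1^+(m)$, which is therefore the common value of the diffusion rate as claimed. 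A Fubini argument finally lets me interchange the quantifiers ``almost every $\Pi$'' and ``almost every $\theta$'' so that the statement reads in the form given.

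The main obstacle I expect is the second step above: upgrading the orbit closure from the base stratum to the hyperelliptic cover. One has to rule out the possibility that $\cM(\tilde S(\Pi))$ is a proper $\GLR$-invariant subset of $\cQ^{hyp}(1^{2m},-1^{2m})$ that nevertheless surjects onto $\cQ(1^m,-1^{m+4})$, and this is handled precisely by the connectedness of the locus recorded in Theorem~\ref{th:lambda1:as:ratio:of:double:factorials} together with the finiteness of the forgetful cover. Everything else reduces to citing the three results already set up in the preceding sections.
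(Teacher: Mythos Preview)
Your proposal is correct and follows the same three-ingredient assembly the paper indicates (Propositions~\ref{pr:diffusion:as:Lyapunov:exponent} and~\ref{pr:almost:any:closure} together with Chaika--Eskin). You are in fact more careful than the paper on one point: the paper leaves implicit the passage from ``orbit closure of $S(\Pi)$ equals $\cQ(1^m,-1^{m+4})$'' to ``orbit closure of $\tilde S(\Pi)$ equals $\cQ^{hyp}(1^{2m},-1^{2m})$'', whereas you spell out that the forgetful map is a $\GLR$-equivariant finite cover and the locus is connected, which is exactly the right justification (the same ingredients appear in the paper's proof of Theorem~\ref{th:lambda1:as:ratio:of:double:factorials}). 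The final Fubini remark is harmless but unnecessary: the quantifiers are already nested in the right order, since Proposition~\ref{pr:almost:any:closure} handles a.e.\ $\Pi$ and then Chaika--Eskin handles a.e.\ $\theta$ for each such $\Pi$.
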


It  remains  to  evaluate  the Lyapunov exponent $\lambda_1^+$ of the
complex  Hodge  bundle  $H^1_{\C}$  over  the  hyperelliptic  locus
$\cQ^{hyp}(1^{2m},-1^{2m})$, which we do in the next section.


\section{Siegel--Veech constants and sum of the Lyapunov exponents
of the Hodge bundle over hyperelliptic loci}
\label{s:Siegel:Veech:constant:for:the:elliptic:locus}

In this section we relate the Siegel--Veech constants of an invariant
hyperelliptic  locus  and  of the underlying stratum of meromorphic
quadratic  differentials with at most simple poles on $\CP$. Applying
the    technique   from~\cite{EKZ}   and   developing   the   results
from~\cite{AEZ}  this  allows  us  to  get  an explicit value for the
desired  Lyapunov  exponent  $\lambda^+_1$  of the hyperelliptic locus and
thus,                             to                            prove
Theorem~\ref{th:lambda1:as:ratio:of:double:factorials}.

\subsection{Sum   of   the   Lyapunov   exponents   of   the  complex
Hodge bundle over hyperelliptic loci of quadratic differentials}
\label{ss:sum:of:lambda:plus}

We need the following result

\begin{NNTheorem}[Theorem~2 in~\cite{EKZ}]
\label{theorem:general:quadratic}
Consider  a  stratum $\cQ_1(d_1,\dots,d_\noz)$ in the moduli space of
quadratic   differentials   with   at   most   simple   poles,  where
$d_1+\dots+d_\noz=4g-4$.  Let $\cM_1$ be any regular $\PSL$-invariant
suborbifold of $\cQ_1(d_1,\dots,d_\noz)$.

The  Lyapunov exponents $\lambda_1^+\ge \dots \ge \lambda_g^+$ of
the complex Hodge bundle $H^1_+=H^1_{\C}$ over $\cM_1$
along the Teichm\"uller flow satisfy the following relation:

\begin{equation}
\label{eq:general:sum:of:plus:exponents:for:quadratic}
\lambda^+_1 + \dots + \lambda^+_g
\ = \
\kappa +\frac{\pi^2}{3}\cdot c_{\mathit{area}}(\cM_1)\,,
\end{equation}
where
\begin{equation}
\label{eq:kappa}
\kappa=\cfrac{1}{24}\,\sum_{j=1}^\noz \cfrac{d_j(d_j+4)}{d_j+2}
\end{equation}
and $c_{\textit{area}}(\cM_1)$   is  the  Siegel--Veech  constant
corresponding  to  the  suborbifold $\cM_1$. By convention the sum in
the                 left-hand                 side                 of
equation~\eqref{eq:general:sum:of:plus:exponents:for:quadratic}    is
defined to be equal to zero for $g=0$.
\end{NNTheorem}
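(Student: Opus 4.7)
The plan is to follow the strategy developed for the analogous formula over strata of Abelian differentials, adapting the main analytic input to the quadratic setting via the orientation double cover. First I would pass to the canonical double cover $\pi : \hat S \to S$, on which $\pi^*q = \omega^2$ for an Abelian differential $\omega$. The deck involution $\sigma$ splits $H^1(\hat S; \C) = H^1_+ \oplus H^1_-$, where $H^1_+ = \pi^* H^1(S; \C)$ has complex dimension $g$ and is preserved by the Gauss-Manin connection along $\cM_1$. This identifies the bundle whose Lyapunov spectrum appears in the theorem with a specific equivariant sub-bundle of the full Hodge bundle on the double cover, and reduces the question to computing $\sum_{i=1}^g \lambda_i^+$ for that sub-bundle.

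Next I would invoke the Forni-Kontsevich formula, which expresses $\sum_{i=1}^g \lambda_i^+$ as the average, against the $\PSL$-invariant probability measure on $\cM_1$, of the leading asymptotic coefficient of $\log\det$ of the Hodge metric on $H^1_+$ along orbits of the Teichm\"uller flow. By a Stokes-type argument this average equals the normalized degree of $\det H^1_+$ on a suitable partial compactification of $\cM_1$; this step relies on ergodicity of the flow, integrability of the Hodge norm near the boundary, and regularity of $\cM_1$ in the sense of Avila-Eskin-M\"oller.

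The degree is then evaluated by analyzing the boundary behaviour of the Hodge metric, which splits into two types of contributions. Local contributions come from saddle connections shrinking to points of the divisor of $q$; at a singularity of order $d_j$ a direct computation on the double cover, tracking the ramification type of $\pi$ above it, gives the explicit local factor $\tfrac{1}{24}\cdot\tfrac{d_j(d_j+4)}{d_j+2}$, and summing over singularities produces $\kappa$. Global contributions come from horizontal cylinders becoming long and thin: the Siegel-Veech integration formula rewrites the corresponding boundary term as a universal constant times the Siegel-Veech constant $c_{\mathit{area}}(\cM_1)$, and a direct period computation on a model cylinder pins that universal constant to $\tfrac{\pi^2}{3}$.

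The main obstacle is the simultaneous asymptotic analysis of the Hodge metric near the multiple boundary strata of $\cM_1$: one must separate the contribution from pinching short saddle connections at the singularities from that of elongating flat cylinders, show that mixed degenerations contribute negligibly, and carefully track the ramification of $\pi$ so that the singularity factors come out in the stated rational form in $d_j$. The Siegel-Veech averaging step in the sub-orbifold $\cM_1$ is also delicate and requires the general regularity theory of $\PSL$-invariant measures rather than only the case of a whole stratum.
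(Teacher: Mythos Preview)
The paper does not contain a proof of this statement at all: it is quoted verbatim as ``Theorem~2 in~\cite{EKZ}'' and used as a black box. Immediately after the statement the authors write ``In the context of this paper we are particularly interested in the case when $\cM$ is a hyperelliptic locus\dots'' and proceed to apply the formula, never revisiting its proof. So there is nothing in this paper to compare your proposal against.

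That said, your outline is a fair high-level summary of the actual strategy in~\cite{EKZ}: passage to the orienting double cover to reduce to the Abelian setting, the Kontsevich--Forni analytic formula expressing the sum of exponents as an integral over $\cM_1$, and the separation of that integral into a combinatorial ``curvature'' term at the singularities (producing $\kappa$) and a boundary term governed by thin cylinders (producing $\tfrac{\pi^2}{3}c_{\mathit{area}}$). As a sketch it is reasonable, though of course the genuine content of~\cite{EKZ} lies precisely in the delicate estimates you flag as obstacles --- the integrability of the Hodge norm near the boundary, the determinant-of-Laplacian analysis giving the exact local factors $\tfrac{d_j(d_j+4)}{d_j+2}$, and the Siegel--Veech averaging for general regular invariant suborbifolds --- none of which are carried out here. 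For the purposes of the present paper, you should simply cite the result; no proof is expected or provided.
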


In  the  context  of this paper we are particularly interested in the
case  when  $\cM$  is  a  hyperelliptic  locus over some stratum of
meromorphic  quadratic  differentials  with  at  most simple poles in
genus  zero.

Let  $\cM_1$  be  a  closed  $\SL$-  (correspondingly $\PSL$-invariant)
suborbifold  in  a  stratum  of  Abelian  (correspondingly quadratic)
differentials;    let   $\nu$   be   the   associated   $\SL$-ergodic
(correspondingly $\PSL$-ergodic) measure on $\cM_1$.

Consider a locus $\tilde\cM_1$ of all possible double covers of fixed
profile  over  flat surfaces from $\cM_1$. Suppose that it is closed,
connected  and  $\SL$-  (correspondingly  $\PSL$-invariant), and that
$\tilde\nu$  is the ergodic measure on $\tilde\cM$ such that $\nu$ is
the   direct  image  of  $\tilde\nu$  with  respect  to  the  natural
projection $\tilde\cM_1\to\cM_1$.

Let  $c_\cC$  be  an  area  Siegel-Veech  constant  associated to the
counting   of   \textit{multiplicity   one   configuration  $\cC$  of
cylinders}  weighted  by  the  area of the cylinder. (Here the notion
``configuration''  is  understood in the sense of~\cite{Masur:Zorich}
and~\cite{Write:cylinder};  ``multiplicity  one''  means  that  $\cC$
contains  a  single  cylinder). The reader might think of $\cM$ as of
some  stratum of quadratic differentials in genus $0$; then there are
only  two  types  of  configurations (see section~\ref{ss:genus:zero}
below  or  the original paper~\cite{Boissy:configurations}), and both
configurations contain a single cylinder.

We  assume  that  the  profile  of  the  double  cover does not admit
branching  points  inside  the  cylinders of the configuration $\cC$.
Then   the  configuration  $\cC$  induces  a  cylinder  configuration
$\tilde\cC$  on  the  double  cover. Let $\tilde c$ be the associated
area  Siegel--Veech constant. The Lemma below relates $c$ and $\tilde
c$. it is a slight generalization of Lemma~1.1 in~\cite{EKZ}.

\begin{Lemma}
\label{lm:delta:c:area}
If  the  circumference  of  the  cylinder  in  the  configuration has
nontrivial  monodromy  (that  is  if  the lift of the cylinder in the
double  cover  is  a  unique  cylinder twice wider), then $\tilde c =
c/2$.

If  the circumference of the cylinder has trivial monodromy (that is,
if  the  preimage of the cylinder consists in two cylinders isometric
to the one on the base), then $\tilde c = 2c$.
\end{Lemma}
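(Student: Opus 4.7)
The plan is to compare the area-weighted Siegel--Veech counts of $\cC$ on the base and of $\tilde\cC$ on the cover, bridge them via the pushforward identity $p_\ast\tilde\nu=\nu$, and take the Siegel--Veech limit. First I would isolate a single cylinder of $\cC$ on a unit-area surface $S\in\cM_1$, say of circumference $\ell$, height $h$ and area $a=\ell h$. Since no branch point lies inside the cylinder, its preimage in $\tilde S$ is a disjoint union of flat cylinders, and the monodromy dichotomy of the statement is exhaustive: trivial monodromy produces two isometric copies of the base cylinder (each of circumference $\ell$ and area $a$), while nontrivial monodromy produces a single connected cylinder of circumference $2\ell$ and area $2a$. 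In either case the total lifted area equals $2a$, consistent with $\tilde S$ having area $2$ before renormalization.

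The pivotal step is the area renormalization. To sit in $\tilde\cM_1$ the cover must be rescaled by $1/\sqrt{2}$, which divides every circumference by $\sqrt{2}$ and every area by $2$. After rescaling, a nontrivial-monodromy lift has circumference $\sqrt{2}\,\ell$ and area $a$, whereas a trivial-monodromy lift consists of two cylinders of circumference $\ell/\sqrt{2}$ and area $a/2$ each. Writing $N^{\cC}_{\text{area}}(S,L)$ for the standard area-weighted count of cylinders of the configuration of circumference at most $L$, the bookkeeping above yields the pointwise identities
\begin{equation*}
N^{\tilde\cC}_{\text{area}}(\tilde S,L) \;=\; N^{\cC}_{\text{area}}\!\bigl(S,\,L/\sqrt{2}\bigr)
\qquad\text{and}\qquad
N^{\tilde\cC}_{\text{area}}(\tilde S,L) \;=\; N^{\cC}_{\text{area}}\!\bigl(S,\,\sqrt{2}\,L\bigr)
\end{equation*}
in the nontrivial and trivial cases respectively: the length constraint on the cover lifts with factor $\sqrt{2}^{\mp 1}$, while the area weights exactly match after summing over the preimage.

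Finally I would integrate both sides against $\tilde\nu$, invoke $p_\ast\tilde\nu=\nu$ to rewrite the integral on $\cM_1$, divide by $\pi L^2$, and let $L\to\infty$. The Siegel--Veech asymptotics $\int N^{\cdot}_{\text{area}}\,d\nu\sim c\cdot\pi L^2\cdot\nu(\cdot)$ is compatible with the substitution $L\mapsto\lambda L$ up to the factor $\lambda^{2}$, so the length rescaling translates directly into $\tilde c=c\cdot(1/\sqrt{2})^{2}=c/2$ in the nontrivial-monodromy case and $\tilde c=c\cdot(\sqrt{2})^{2}=2c$ in the trivial-monodromy case. The single technical point requiring genuine care is precisely this area-renormalization factor $\sqrt{2}$: the naive identity between lifted and base areas turns into the stated factors $1/2$ and $2$ only after rescaling the cover from area $2$ down to unit area, and it is easy to lose track of this scaling unless one fixes the normalization at the outset.
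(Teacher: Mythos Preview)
Your argument is correct and is precisely the standard computation: track how circumference and area of a single base cylinder transform under the double cover, renormalize the cover to unit area, and read off the factor $\lambda^{2}$ with $\lambda=1/\sqrt{2}$ or $\sqrt{2}$ in the integrated Siegel--Veech formula. The paper itself does not spell this out; its proof consists of the sentence ``The second case corresponds to Lemma~1.1 in~[EKZ]. The first case is proved analogously.'' You have effectively reconstructed that lemma, so your approach and the paper's (deferred) approach coincide.

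One small stylistic remark: the Siegel--Veech formula you invoke is in fact an \emph{exact} identity $\displaystyle\frac{1}{\nu(\cM_1)}\int N^{\cC}_{\text{area}}(S,L)\,d\nu=c\cdot\pi L^{2}$ for every $L$, not merely an asymptotic, so the passage to the limit $L\to\infty$ is unnecessary once you have the pointwise identity and $p_{\ast}\tilde\nu=\nu$. This does not affect correctness, only presentation.
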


\begin{proof}
The  second  case  corresponds  to Lemma~1.1 in~\cite{EKZ}. The first
case is proved analogously.
\end{proof}

In  the same setting, let $\kappa$ be the expression~\eqref{eq:kappa}
in  degrees  of  singularities  of the flat surfaces in the invariant
manifold  $\cM$ and $\tilde\kappa$ be analogous expression in degrees
of  singularities  of  the  double  covers  of  fixed  profile in the
invariant manifold $\tilde\cM$.

It would be convenient to introduce the following notations
\begin{align*}
&\Dk:=\widetilde{\kappa} - 2 \kappa\\
&\Dc:=c_{\mathit{area}}(\tilde{\cM_1})-2c_{\mathit{area}}(\cM_1)\,.
\end{align*}

\begin{Lemma}
For  any  ramified  double  covering  the  degrees  of  zeroes of the
quadratic  differential  on the underlying surface and of the induced
quadratic  differential  on  the  double  cover satisfy the following
relation:
\begin{equation}
\label{eq:Delta:kappa}
\Dk=\widetilde{\kappa} - 2 \kappa = \frac{1}{4}
\sum_{\substack{\mathit{ramification}\\
\mathit{points}}}
\frac{1}{d_j + 2}\,.
\end{equation}
\end{Lemma}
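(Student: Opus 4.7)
The plan is to compute $\widetilde{\kappa}-2\kappa$ by a direct local calculation, using how the singularities of a quadratic differential transform under a ramified double cover, and then observing that only ramification points contribute nontrivially.

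First I would recall the local behavior: if $\pi\colon \tilde X \to X$ is the double cover and $P$ is a singularity of the quadratic differential $q$ on $X$ of order $d$ (so that $q$ looks like $z^{d}\,dz^{2}$ in a local coordinate centered at $P$), then there are two cases.
\begin{itemize}
\item If $P$ is \emph{not} a ramification point of $\pi$, its preimage consists of two points of $\tilde X$, each of which is a singularity of $\pi^{\ast}q$ of order $d$ again. Their joint contribution to $\widetilde{\kappa}$ equals
$$
2\cdot\frac{1}{24}\cdot\frac{d(d+4)}{d+2},
$$
which is exactly twice the contribution of $P$ to $\kappa$. Hence such points contribute $0$ to $\Dk$.
\item If $P$ \emph{is} a ramification point, its preimage is a single point $\tilde P$; choosing a local coordinate $w$ on $\tilde X$ with $z=w^{2}$ we get $\pi^{\ast}q = w^{d}\cdot(2w)^{2}\,dw^{2}=4w^{2d+2}\,dw^{2}$. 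Hence $\tilde P$ is a singularity of order $\tilde d = 2d+2$.
\end{itemize}

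Next I would sum the contributions from ramification points. For each such $P$ of order $d=d_{j}$, its contribution to $\widetilde{\kappa}-2\kappa$ equals
$$
\frac{1}{24}\cdot\frac{(2d_{j}+2)(2d_{j}+6)}{2d_{j}+4}
\;-\;2\cdot\frac{1}{24}\cdot\frac{d_{j}(d_{j}+4)}{d_{j}+2}
\;=\;\frac{1}{12(d_{j}+2)}\bigl[(d_{j}+1)(d_{j}+3)-d_{j}(d_{j}+4)\bigr].
$$
The bracket simplifies to $3$, so this contribution equals $\dfrac{1}{4(d_{j}+2)}$. Summing over all ramification points yields the claimed formula.

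This proof is essentially a one-line algebraic identity, so I do not expect any serious obstacle; the only thing to be careful about is bookkeeping of the factor $2$ coming from the differential $dz=2w\,dw$ when pulling back $q$ at a ramification point, which is what produces the relevant cancellation and the final factor $\tfrac14$.
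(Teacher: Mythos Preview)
Your proof is correct and follows essentially the same approach as the paper: both observe that unramified singularities cancel in $\Dk$, that a ramification point of order $d_j$ lifts to a singularity of order $2d_j+2$, and then perform the identical algebraic simplification. (One small typo: in your local pullback you wrote $w^{d}\cdot(2w)^{2}$ where you meant $w^{2d}\cdot(2w)^{2}$, but your final expression $4w^{2d+2}\,dw^{2}$ and all subsequent steps are correct.)
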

\begin{proof}
The non ramified zeros cancel out. For the other zeroes (and poles) a
singularity of degree $d_j$ at the ramification point gives rise to a
zero of degree $2d_j + 2$. Hence,
\begin{align*}
\widetilde{\kappa} - 2 \kappa
&=  \frac{1}{24}
\sum_{\substack{\mathit{ramification}\\\mathit{points}}}
\left( \frac{(2d_j+2)(2d_j+6)}{2d_j+4} - 2 \frac{d_j(d_j+4}{d_j+2} \right) \\
&= \frac{1}{12} \sum_{\substack{\mathit{ramification}\\
\mathit{points}}}
\frac{(d_j+1)(d_j+3) - d_j(d_j+4)}{d_j + 2} \\
&= \frac{1}{12}
\sum_{\substack{\mathit{ramification}\\
\mathit{points}}}
\frac{3}{d_j+2}\,.
\end{align*}
\end{proof}

The following notational Lemma would help to simplify certain
bulky computations.

\begin{Lemma}
For  any  locus  $\tilde\cM_1$  of double coverings of fixed profile as
above  over  a  $\PSL$-invariant  orbifold  $\cM_1$  in some stratum of
meromorphic  quadratic  differentials  with  at  most simple poles on
$\CP$ the sum of Lyapunov exponents
$$
\Lambda^+=\lambda^+_1+\dots\lambda^+_g
$$
satisfies the following relation
\begin{equation}
\label{eq:Lambda:plus}
\Lambda^+=\Dk+\frac{\pi^2}{3}\cdot\Dc\,.
\end{equation}
\end{Lemma}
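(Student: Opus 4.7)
The plan is to derive the identity by applying the cited Theorem~2 of \cite{EKZ} twice, once to the base $\cM_1$ and once to the cover $\tilde\cM_1$, and then subtracting.

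First, I would apply the EKZ formula to $\cM_1$. By hypothesis, $\cM_1$ lives in a stratum of meromorphic quadratic differentials with at most simple poles on $\CP$, so every flat surface in $\cM_1$ has genus $g=0$. The convention stated in the Theorem says that the sum $\lambda^+_1 + \dots + \lambda^+_g$ is defined to be $0$ in genus zero, so
\begin{equation*}
0 \ = \ \kappa + \frac{\pi^2}{3}\cdot c_{\mathit{area}}(\cM_1).
\end{equation*}
Multiplying through by $2$ gives $2\kappa + \tfrac{2\pi^2}{3} c_{\mathit{area}}(\cM_1) = 0$, which will be subtracted below.

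Next, I would apply the same Theorem to the locus $\tilde\cM_1$ of double covers. By hypothesis $\tilde\cM_1$ is a closed $\PSL$-invariant suborbifold in a stratum of quadratic differentials, and it is regular (which we inherit from the assumption together with the fact that it is obtained by a fixed covering construction from the regular $\cM_1$). Denoting by $g$ the genus of a surface in $\tilde\cM_1$, the Theorem yields
\begin{equation*}
\Lambda^+ \ = \ \lambda^+_1 + \dots + \lambda^+_g \ = \ \widetilde\kappa + \frac{\pi^2}{3}\cdot c_{\mathit{area}}(\tilde\cM_1).
\end{equation*}

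Finally, subtracting the first displayed equation (multiplied by $2$) from the second, and using the definitions $\Dk = \widetilde\kappa - 2\kappa$ and $\Dc = c_{\mathit{area}}(\tilde\cM_1) - 2 c_{\mathit{area}}(\cM_1)$, we obtain
\begin{equation*}
\Lambda^+ \ = \ (\widetilde\kappa - 2\kappa) + \frac{\pi^2}{3}\bigl(c_{\mathit{area}}(\tilde\cM_1) - 2 c_{\mathit{area}}(\cM_1)\bigr) \ = \ \Dk + \frac{\pi^2}{3}\cdot \Dc,
\end{equation*}
which is exactly \eqref{eq:Lambda:plus}. The step that could plausibly require some care is verifying the regularity hypothesis in EKZ's Theorem for $\tilde\cM_1$ and the corresponding measurability/ergodicity statements, but these are granted by the standing assumptions on the covering construction, so the main content of the lemma is the bookkeeping performed above.
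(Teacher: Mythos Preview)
Your proof is correct and follows exactly the same approach as the paper: apply the EKZ formula once to $\cM_1$ (where the left-hand side vanishes since the genus is zero), once to $\tilde\cM_1$, and subtract twice the former from the latter. The paper treats this as a purely notational lemma and does not discuss the regularity hypothesis you flag, so your write-up is if anything slightly more careful.
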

\begin{proof}
For  any  invariant  submanifold  $\cM_1$ in a stratum of meromorphic
quadratic differentials with at most simple poles on $\CP$ the sum of
Lyapunov           exponents           is           null,          so
formula~\eqref{eq:general:sum:of:plus:exponents:for:quadratic} gives
$$
0\ =\ \kappa +\frac{\pi^2}{3}\cdot c_{\mathit{area}}(\cM_1)\,.
$$
By the same
formula~\eqref{eq:general:sum:of:plus:exponents:for:quadratic} gives
we have
$$
\Lambda^+
\ =\ \tilde\kappa +\frac{\pi^2}{3}\cdot c_{\mathit{area}}(\tilde\cM_1)\,.
$$
Extracting  from the latter relation twice the previous one we obtain
the desired relation~\eqref{eq:Lambda:plus}.
\end{proof}

\subsection{Configurations   for   the   strata  in  genus  zero  and
corresponding Siegel--Veech constants (after~Boissy
and~Athreya--Eskin--Zorich)}
\label{ss:genus:zero}

In     this     section     we    recall    briefly    the    results
from~\cite{Boissy:configurations}    describing   configurations   of
periodic  geodesics  for flat surfaces in genus zero, and the results
from~\cite{AEZ}   providing   the   values   of   the   corresponding
Siegel--Veech constants. By $Q(d_1,\dots,d_k)$ we denote a stratum of
meromorphic  quadratic differentials with at most simple poles, where
$d_i\in\{-1,1,2,\dots\}$   denote   all   zeroes   and   poles,   and
$\sum_{i=1}^k d_i = -4$.

\textbf{A ``pocket''.}
In  this  configuration  we have a single cylinder filled with closed
regular  geodesics,  such  that  the  cylinder is bounded by a saddle
connection joining a fixed pair of simple poles $P_{j_1}, P_{j_2}$ on
one  side  and  by  a separatrix loop emitted from a fixed zero
$P_i$  of  order  $d_i\ge 1$ on the other side.

By  convention,  the affine holonomy associated to this configuration
corresponds  to  the  closed  geodesic and \textit{not} to the saddle
connection joining the two simple poles. (Such a saddle connection is
twice as short as the closed geodesic.)

\begin{figure}[htb]
%
   %
\includegraphics{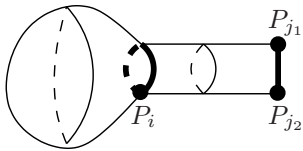}
\begin{picture}(0,0)(-65,-5)
\put(-70,-52){$P_i$}
\put(-18,-16){$P_{j_1}$}
\put(-18,-52){$P_{j_2}$}
\end{picture}
\vspace{60bp}
\caption{
\label{fig:III:pocket}
A  ``pocket''  configuration with a cylinder bounded on one side by a
saddle   connection  joining  two  simple  poles,  and  by  a  saddle
connection joining a zero to itself on the other side.
   }
\end{figure}

By  Theorem~4.5  and  formula (4.28) in~\cite{AEZ}, the Siegel--Veech
constant $c^{pocket}_{j_1,j_2;i}$ corresponding to this configuration
has the form
\begin{equation}
\label{eq:SV:constant:pocket:detailed}
c^{pocket}_{j_1,j_2;i} = \frac{d_i+1}{(k-4)}\cdot\cfrac{1}{2\pi^2}\,.
\end{equation}

One  can consider the union of several configurations as above fixing
the   pair   of  simple  poles  $P_{j_1},  P_{j_2}$  but  considering
\textit{any} zero $P_i$ on the boundary of the cylinder. By Corollary
4.7  and  formula  (4.36)  in~\cite{AEZ}, the resulting Siegel--Veech
constant  $c^{pocket}_{j_1,j_2}$  corresponding to this configuration
has the form
\begin{equation}
\label{eq:SV:constant:pocket}
c^{pocket}_{j_1,j_2} = \cfrac{1}{2\pi^2}\,.
\end{equation}
\smallskip

\textbf{A   ``dumbbell''.}
For  the  second configuration we still have a single cylinder filled
with  closed regular geodesics. But this time the cylinder is bounded
by a separatrix loop on each side. We assume that the separatrix loop
bounding  the cylinder on one side is emitted from a fixed zero $P_i$
of  order  $d_i\ge  1$  and  that  the  separatrix  loop bounding the
cylinder  on  the  other  side  is emitted from a fixed zero $P_j$ of
order $d_j\ge 1$.

Such  a cylinder separates the original surface $S$ in two parts; let
$P_{i_1},  \dots,  P_{i_{k_1}}$  be the list of singularities (zeroes
and  simple  poles)  which get to the first part and $P_{j_1}, \dots,
P_{j_{k_2}}$  be  the list of singularities (zeroes and simple poles)
which  get  to  the  second  part. In particular, we have $i\in\{i_1,
\dots,  i_{k_1}\}$  and $j\in\{j_1, \dots, j_{k_2}\}$. We assume that
$S$  does  not have any marked points. Denoting as usual by $d_k$ the
order  of  the  singularity  $P_k$  we  can  represent  the sets with
multiplicities  $\alpha:=\{d_1,  \dots, d_k\}$ as a disjoint union of
the two subsets
$$
\{d_1, \dots, d_k\}=
\{d_{i_1}, \dots d_{i_{k_1}}\}\sqcup
\{d_{j_1}, \dots, d_{j_{k_2}}\}.
$$
(Recall  that  $\{d_1,  \dots,  d_k\}$ denotes all zeroes and poles.)
This information is considered to be part of the configuration.

\begin{figure}[htb]
\centering
%
   %
\includegraphics{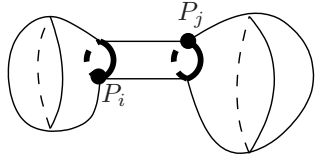}
\begin{picture}(0,0)(45,0)
\put(33,-35){$P_i$}
\put(62,-4){$P_j$}
\end{picture}
\vspace{60bp}
\caption{
\label{fig:IV:dumbbell}
A  ``dumbbell''  composed  of  two flat spheres joined by a cylinder.
Each  boundary  component  of  the  cylinder  is  a saddle connection
joining a zero to itself.
   }
\end{figure}

By  Theorem  4.8  and equation (4.38) in~\cite{AEZ} the corresponding
Siegel--Veech constant $c^{dumbell}$ is expressed as follows:

\begin{equation}
\label{eq:SV:constant:dumbell}
c^{dumbell}_{i,j} =\cfrac{(d_i+1)(d_j+1)}{2}\cdot
\cfrac{(k_1-3)!\,(k_2-3)!}{(k-4)!}\cdot\cfrac{1}{\pi^2}\,.
\end{equation}
According   to~\cite{Boissy:configurations}  and~\cite{Masur:Zorich},
almost any flat surface $S$ in any stratum $\cQ_1(d_1, \dots,d_k)$ of
meromorphic   quadratic  differentials  with  at  most  simple  poles
different  from  the pillowcase stratum $\cQ_1(-1^4)$ does not have a
single  regular  closed  geodesic  not  contained  in  one of the two
families described above.

Finally,  by Corollary 4.10 from~\cite{AEZ} (generalizing the theorem
of   Vorobets   from~\cite{Vorobets})   the   Siegel--Veech  constant
$c_{area}$ is expressed in terms of the above Siegel--Veech constants
as  follows.  For  any  stratum $\cQ_1(d_1,\dots,d_k)$ of meromorphic
quadratic  differentials with simple poles on $\CP$ the Siegel--Veech
constant  $c_{area}$  is  expressed  in  terms  of  the Siegel--Veech
constants of configurations as follows:

\begin{equation}
\label{eq:c:area}
c_{area}=
\cfrac{1}{k-3}\,\cdot\,
\sum_{\substack{\mathit{Configurations}\ \cC\\
\mathit{containing\ a\ cylinder}}}
c_\cC\,.
\end{equation}

\subsection{Proof of Theorem~\ref{th:lambda1:as:ratio:of:double:factorials}}
\label{ss:one:complicated:obstacle}

Now      everything     is     ready     for     the     proof     of
Theorem~\ref{th:lambda1:as:ratio:of:double:factorials}.

\begin{proof}
The  connectedness  of  our hyperelliptic locus follows from the fact
that   it   can  be  seen  as  a  $\C^{\ast}$-bundle  over  the  space
$\cC(m,m,4)$  of configurations of points on $\CP$ considered up to a
modular transformation. More precisely, $\cC(m,m,4)$ can be seen as a
set  of  configurations of $2m+4$ distinct points arranged into three
groups  (``colored into three colors'') of cardinalities $\{m,m,4\}$,
where  the  points  inside  each  groups  are  named. The first group
represents  the  simple  zeroes, the second one --- unramified simple
poles,  and the last one --- the ramified simple poles. Clearly, such
space is connected.

By the results of H.~Masur~\cite{Masur} and of W.~Veech~\cite{Veech},
the Teichm\"uller geodesic flow on the underlying stratum is ergodic,
and, moreover, ``sufficiently hyperbolic''. Thus, the induced flow on
any  connected  finite cover is also ergodic. The hyperelliptic locus
$\cQ^{hyp}(1^{2m},-1^{2m})$   over  $\cQ(1^m,-1^{m+4})$  is  a  finite
cover,  and  as  we have just proved it is connected. This proves the
ergodicity of the Teichm\"uller flow on the hyperelliptic locus.

Let     us     compute    $\Delta\tilde    c_{area}^{pocket}$,    see
section~\ref{ss:sum:of:lambda:plus}.   When   both   simple  poles
$P_{j_1},  P_{j_2}$  involved  in  the  ``pocket''  configuration are
unramified points or when they are both ramified, the holonomy of the
hyperelliptic  cover  $\tilde  S\to  S$  along  the  perimeter of the
cylinder   is   trivial,   so   by  Lemma~\ref{lm:delta:c:area}  such
configurations do not contribute to $\Delta\tilde c_{area}^{pocket}$.
By Lemma~\ref{lm:delta:c:area}, a configuration when one of $P_{j_1},
P_{j_2}$ is ramified and the other one is nonramified, contributes to
$\Delta\tilde  c_{area}^{pocket}$ with a weight $-\frac{3}{2}$. Since
we  have  $4$ ramified poles and $m$ nonramified, there are $4m$ such
configurations.      Applying~\eqref{eq:SV:constant:pocket}
and~\eqref{eq:c:area} with $k=m+(m+4)$ we get
\begin{equation}
\label{eq:impact:of:pocket}
\frac{\pi^2}{3}\cdot\Delta\tilde c_{area}^{pocket}=
\frac{\pi^2}{3}\cdot 4m\cdot\left(-\frac{3}{2}\right)\cdot\left(\frac{1}{2m+1}\cdot\frac{1}{2\pi^2}\right)=
-\frac{m}{2m+1}\,.
\end{equation}

Let  us proceed to computation of $\Delta\tilde c_{area}^{dumbbell}$.
When  the  number  of  ramified  simple  poles  on  two  parts of the
``dumbbell''  is  even, the holonomy of the cover along the perimeter
of  the  cylinder  is trivial, so by Lemma~\ref{lm:delta:c:area} such
configurations     do     not     contribute     to     $\Delta\tilde
c_{area}^{dumbbell}$.
By  Lemma~\ref{lm:delta:c:area},  a  configuration when the number of
ramified simple poles on each side of the dumbbell is odd contributes
to  $\Delta\tilde  c_{area}^{dumbbell}$ with a weight $-\frac{3}{2}$.
To  compute  the number of such configurations we remark that we have
to split $m$ named simple zeroes into two groups of $m_1$ and $m-m_1$
ones;  we  also  have to split $4$ ramified simple poles into $1$ and
$3$;  finally  we  have  to  split  $m$  unramified simple poles into
$m_1+1$  and  $m-m_1-1$ ones to have in total $m_1+2$ simple poles on
one  side  of  the ``dumbbell'' and $m-m_1+2$ on the other side. Note
that the fact that there is a single ramified simple pole on one part
and  $3$  ramified  simple  poles  on  the  other  makes our count of
configurations  asymmetric. Finally note that we have to chose one of
$m_1$  zeroes  to  be  located at the boundary of the cylinder on one
side  and  one of $m-m_1$ zeroes to be located at the boundary of the
cylinder on the other side.

For any given $m_1$, where $1\le m_1 \le m-1$ our count gives
$$
\binom{m}{m_1}\cdot\binom{4}{1}\cdot\binom{m}{m_1-1}\cdot \left(m_1\cdot(m-m_1)\right)
$$

Applying                          the                         general
formulae~\eqref{eq:SV:constant:dumbell}
and~\eqref{eq:c:area}  to  $c^{dumbell}_{i,j;\,  area}$;  taking  into
consideration  that  in  our  particular  case  we  have $d_i=d_j=1$;
$k_1=m_1+(m_1+2)$;   $k_2=(m-m_1)+(m-m_1+2)$;  and  $k=m+(m+4)$;  and
applying~\eqref{eq:c:area} we get

\begin{multline}
\label{eq:SV:const:dumbell:swiss:cross}
\frac{\pi^2}{3}\Delta\tilde c^{dumbell}_{area}
=\\=
\frac{\pi^2}{3}\sum_{m_1=1}^{m-1}
\Bigg(\binom{m}{m_1}\cdot\binom{4}{1}\cdot\binom{m}{m_1-1}\cdot \left(m_1\cdot(m-m_1)\right)\Bigg)
\cdot\left(-\frac{3}{2}\right)
\cdot\\ \cdot
\frac{1}{(2m+4)-3}\cdot
\cfrac{(1+1)(1+1)}{2}\cdot
\cfrac{\left((2m_1+2)-3\right)!\cdot\left((2m-2m_1+2)-3\right)!}{(2m+4)-4)!}\cdot\cfrac{1}{\pi^2}
=\\=
-\frac{1}{2m+1}\cdot
\sum_{m_1=1}^{m-1}
\binom{m}{m_1}\binom{m}{m_1-1}\cdot
\cfrac{(2m_1)!\cdot(2m-2m_1)!}{(2m)!}
=\\=
-\frac{1}{2m+1}\cdot
\sum_{m_1=1}^{m-1}
\cfrac{\binom{m}{m_1}\binom{m}{m_1-1}}{\binom{2m}{2m_1}}
\end{multline}

Summing                                up~\eqref{eq:impact:of:pocket}
and~\eqref{eq:SV:const:dumbell:swiss:cross} and applying the standard
convention
$$
\binom{m}{m+1}:=0\qquad\text{and}\qquad\binom{m}{0}:=1
$$
we obtain
\begin{equation}
\label{eq:delta:tilde:c:area}
\frac{\pi^2}{3}\Delta\tilde c_{area}=
-\frac{1}{2m+1}\sum_{m_1=0}^{m}
\frac{\binom{m}{m_1}\binom{m}{m_1+1}}
{\binom{2m}{2m_1}}
=-1+\frac{(2m)!!}{(2m+1)!!}
\,.
\end{equation}
where     the     second     equality     ils    the    combinatorial
identity~\eqref{eq:00:01:00}                 proved                in
Proposition~\ref{prop:main:identity:1}.

By formula~\eqref{eq:Delta:kappa} we have
\begin{equation}
\label{eq:Delta:tilde:kappa:1}
\Delta\tilde\kappa=\frac{1}{4}\cdot\sum_{i=1}^4\frac{1}{(-1+2)}=1
\end{equation}

Plugging           the          results~\eqref{eq:delta:tilde:c:area}
and~\eqref{eq:Delta:tilde:kappa:1}  of our calculation in the general
formula~\eqref{eq:Lambda:plus} for the sum $\Lambda^+$ we get
$$
\Lambda^+=1+\left(-1+\frac{(2m)!!}{(2m+1)!!}\right)\,.
$$
It remains to note that the stratum $\cQ(1^{2m},-1^{2m})$ corresponds
to  genus  one,  so  the  spectrum  of  Lyapunov exponents of $H^1_+$
contains  a  single  entry  and $\Lambda^+=\lambda^+_1$, which proves
Theorem~\ref{th:lambda1:as:ratio:of:double:factorials}.
\end{proof}

\begin{Corollary}
\label{cor:lambda1:tends:to:zero}
The  Lyapunov exponent $\lambda_1^+(m)$ tends to zero as $m$ tends to
infinity. More precisely,
\begin{equation}
\label{eq:asymptotics:for:lambda1}
\delta(m)=
\frac{\sqrt{\pi}}{2\sqrt{m}}
\left(1+O\left(\frac{1}{m}\right)\right)\,.
\end{equation}
\end{Corollary}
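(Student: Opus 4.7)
The plan is to deduce the asymptotics directly from the exact formula $\lambda_1^+(m)=\delta(m)=(2m)!!/(2m+1)!!$ supplied by Theorem~\ref{th:lambda1:as:ratio:of:double:factorials}, by a routine application of Stirling's formula. First I would rewrite the double factorials in closed form using $(2m)!!=2^m m!$ and $(2m+1)!!=(2m+1)!/(2^m m!)$ to obtain
$$
\delta(m)=\frac{4^m (m!)^2}{(2m+1)!}=\frac{1}{2m+1}\cdot\frac{4^m}{\binom{2m}{m}}.
$$

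Then I would invoke the standard Stirling expansion of the central binomial coefficient,
$$
\binom{2m}{m}=\frac{4^m}{\sqrt{\pi m}}\left(1-\frac{1}{8m}+O\!\left(\frac{1}{m^2}\right)\right),
$$
which follows from $m!=\sqrt{2\pi m}\,(m/e)^m\bigl(1+\tfrac{1}{12m}+O(m^{-2})\bigr)$. Substituting this into the formula above yields
$$
\delta(m)=\frac{\sqrt{\pi m}}{2m+1}\left(1+\frac{1}{8m}+O\!\left(\frac{1}{m^2}\right)\right).
$$

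Finally I would expand $1/(2m+1)=\frac{1}{2m}\bigl(1-\frac{1}{2m}+O(m^{-2})\bigr)$ and multiply out, absorbing the $\tfrac{1}{8m}$ Stirling correction and the $-\tfrac{1}{2m}$ correction into a single $O(1/m)$ remainder:
$$
\delta(m)=\frac{\sqrt{\pi}}{2\sqrt{m}}\left(1+O\!\left(\frac{1}{m}\right)\right),
$$
which is the desired asymptotic. In particular $\delta(m)\to 0$ as $m\to\infty$, proving also the first sentence of the corollary.

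There is no genuine obstacle in this argument: once the exact value of $\lambda_1^+(m)$ is known, everything reduces to elementary asymptotics. The only point requiring mild attention is bookkeeping of the error, where the $1/(2m)$ correction coming from $1/(2m+1)$ dominates the $1/(8m)$ correction coming from Stirling's formula, so that the combined remainder is genuinely $O(1/m)$ (rather than something smaller); a sharper expansion would yield the explicit leading correction $-3/(8m)$, but this is not needed for the statement as formulated.
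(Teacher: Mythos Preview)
Your proof is correct and follows essentially the same approach as the paper: rewrite the double factorials as $\frac{1}{2m+1}\cdot 2^{2m}\frac{(m!)^2}{(2m)!}$ and apply Stirling's formula to obtain $\frac{\sqrt{\pi m}}{2m+1}(1+O(1/m))$. Your version is slightly more detailed in tracking the explicit correction terms, but the argument is the same.
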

\begin{proof}
Rewriting  the  double  factorials  in  terms  of usual factorials as
$$
\frac{(2m)!!}{(2m+1)!!}=
\frac{\left((2m)!!\right)^2}{(2m+1)!}=
\frac{1}{2m+1}\cdot 2^{2m}\frac{(m!)^2}{(2m)!}\,,
$$
and applying the Stirling's formula
$$
n!=\sqrt{2\pi n}\left(\frac{n}{e}\right)^n
\left(1+O\left(\frac{1}{n}\right)\right)\,,
$$
to  both  factorials  and simplifying the resulting expression we get
$$
\frac{(2m)!!}{(2m+1)!!}=
\frac{\sqrt{\pi m}}{2m+1}
\left(1+O\left(\frac{1}{m}\right)\right)\,.
$$
Clearly,  the  latter  expression  tends  to  zero  as  $m$  tends to
infinity.
\end{proof}


\section{Combinatorial identities}
\label{s:combinatorial:identities}

\begin{Proposition}
\label{prop:main:identity:1}
For any $m\in\N$ the following identities hold
\begin{align}
\label{eq:00:00:00}
&\sum_{k=0}^{m}
\cfrac{\binom{m}{k}\binom{m}{k}}{\binom{2m}{2k}}
=
\cfrac{(2m)!!}{(2m-1)!!} = 4^m \cfrac{(m!)^2}{(2m)!}\\
\notag\\
\label{eq:00:01:00} 
&\sum_{k=0}^{m}
\frac{\binom{m}{k}\binom{m}{k+1}}
{\binom{2m}{2k}}
=
2m+1-\frac{(2m)!!}{(2m-1)!!}\\
\notag\\
\label{eq:00:11:00}
&\sum_{k=0}^{m}
\cfrac{\binom{m}{k}\binom{m+1}{k+1}}{\binom{2m}{2k}}
=2m+1\,.
\end{align}
\end{Proposition}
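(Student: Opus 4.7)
The plan is to reduce all three identities to classical convolutions involving central binomial and Catalan numbers, and then deduce them from the standard generating functions
\[
\sum_{k\ge 0}\binom{2k}{k}x^{k}=\frac{1}{\sqrt{1-4x}},
\qquad
\sum_{k\ge 0}C_{k}x^{k}=\frac{1-\sqrt{1-4x}}{2x},
\]
where $C_{k}=\frac{1}{k+1}\binom{2k}{k}$ is the $k$-th Catalan number. I would handle \eqref{eq:00:11:00} first by noting that Pascal's rule $\binom{m+1}{k+1}=\binom{m}{k}+\binom{m}{k+1}$ makes the left-hand side equal to the sum of the left-hand sides of \eqref{eq:00:00:00} and \eqref{eq:00:01:00}; the terms $\pm\,(2m)!!/(2m-1)!!$ then cancel and yield $2m+1$ for free. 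So the real task is to establish \eqref{eq:00:00:00} and \eqref{eq:00:01:00}.

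For \eqref{eq:00:00:00} I would use the factorial rearrangement
\[
\frac{\binom{m}{k}^{2}}{\binom{2m}{2k}}
=\frac{1}{\binom{2m}{m}}\binom{2k}{k}\binom{2m-2k}{m-k},
\]
which is immediate from writing both sides as ratios of factorials (it is the $j=k$ specialization of the universal identity $\binom{m}{k}\binom{m}{j}\binom{2m}{m}=\binom{2m}{k+j}\binom{k+j}{k}\binom{2m-k-j}{m-k}$). Hence \eqref{eq:00:00:00} is equivalent to the convolution
\[
\sum_{k=0}^{m}\binom{2k}{k}\binom{2m-2k}{m-k}=4^{m},
\]
which follows from squaring $(1-4x)^{-1/2}$ to get $(1-4x)^{-1}=\sum_{m}4^{m}x^{m}$ and reading off the coefficient of $x^{m}$. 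The right-hand side $(2m)!!/(2m-1)!!=4^{m}(m!)^{2}/(2m)!=4^{m}/\binom{2m}{m}$ then matches.

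For \eqref{eq:00:01:00} I would again rearrange factorials: since $\binom{m}{k+1}=\frac{m-k}{k+1}\binom{m}{k}$,
\[
\frac{\binom{m}{k}\binom{m}{k+1}}{\binom{2m}{2k}}
=\frac{m-k}{k+1}\cdot\frac{\binom{2k}{k}\binom{2m-2k}{m-k}}{\binom{2m}{m}}.
\]
Writing $\frac{m-k}{k+1}=\frac{m+1}{k+1}-1$ splits the sum into two pieces, the second of which is $-4^{m}/\binom{2m}{m}$ by the identity already proved. The first piece reduces to
\[
\sum_{k=0}^{m}C_{k}\binom{2m-2k}{m-k}=\tfrac{1}{2}\binom{2m+2}{m+1},
\]
which I would obtain as the coefficient of $x^{m}$ in the product $\frac{1-\sqrt{1-4x}}{2x}\cdot\frac{1}{\sqrt{1-4x}}=\frac{1}{2x}\bigl((1-4x)^{-1/2}-1\bigr)$. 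After multiplying by $(m+1)/\binom{2m}{m}$ this first piece collapses to $2m+1$, and combining the two pieces gives precisely the right-hand side of \eqref{eq:00:01:00}.

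The manipulations are entirely routine; the only step that requires any care is the bookkeeping in \eqref{eq:00:01:00}, namely isolating the Catalan convolution by the decomposition $\frac{m-k}{k+1}=\frac{m+1}{k+1}-1$ and verifying that $\frac{m+1}{2}\binom{2m+2}{m+1}=(2m+1)\binom{2m}{m}$, which I expect to be the one place where a slip of arithmetic would most easily occur.
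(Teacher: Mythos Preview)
Your argument is correct, and it proceeds along a somewhat different route from the paper's. Both proofs begin with the Pascal observation that the left-hand side of \eqref{eq:00:11:00} is the sum of the left-hand sides of \eqref{eq:00:00:00} and \eqref{eq:00:01:00}, so that two of the three identities suffice. Both also reduce \eqref{eq:00:00:00} to the central-binomial convolution $\sum_{k=0}^{m}\binom{2k}{k}\binom{2m-2k}{m-k}=4^{m}$; the paper simply cites this as identity (3.90) in Gould's tables, whereas you supply the one-line generating-function proof. The substantive divergence is in which second identity is attacked directly: the paper proves \eqref{eq:00:11:00} by rewriting it as $s_{3}(m)=\binom{2m+1}{m}$ and running an induction based on the recurrence $(m+2)s_{3}(m+1)-2(2m+3)s_{3}(m)=0$, then obtains \eqref{eq:00:01:00} by subtraction; you instead prove \eqref{eq:00:01:00} directly via the decomposition $\tfrac{m-k}{k+1}=\tfrac{m+1}{k+1}-1$ and the Catalan convolution $\sum_{k=0}^{m}C_{k}\binom{2m-2k}{m-k}=\tfrac{1}{2}\binom{2m+2}{m+1}$, again read off from generating functions. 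Your approach is more uniform and avoids the somewhat ad hoc recurrence manipulation; the paper's induction, on the other hand, is entirely self-contained and needs no power-series machinery. Either way the arithmetic check $\tfrac{m+1}{2}\binom{2m+2}{m+1}=(2m+1)\binom{2m}{m}$ you flagged is indeed the only delicate spot, and it is correct.
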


\begin{proof}[Proof of Proposition~\ref{prop:main:identity:1}]
First note that
$$
\binom{m+1}{k+1}=\binom{m+1}{k}+\binom{m}{k}\,.
$$
Thus,  the expression in the left-hand side of~\eqref{eq:00:11:00} is
the  sum  of  the  corresponding  expressions  in the left-hand sides
of~\eqref{eq:00:00:00} and~\eqref{eq:00:01:00}. Hence, any two out of
three  identities~\eqref{eq:00:00:00}--\eqref{eq:00:11:00}  imply the
remaining one.
\smallskip

\noindent  \textbf{Proof of identity~(\ref{eq:00:11:00}).}
Developing       binomial      coefficients      into      factorials
in~\eqref{eq:00:11:00}  and  moving  the  common  factorials  to  the
right-hand side, we can rewrite this identity as
\begin{equation}
\label{eq:s3:definition}
\sum_{k=0}^{m}
\frac{(2k)!}{k!\,(k+1)!}\cdot
\frac{(2m-2k)!}{(m-k)!\,(m-k)!}\stackrel{?}{=}
\binom{2m+1}{m}\,.
\end{equation}
Denote the sum in the left-hand side of~\eqref{eq:s3:definition}
by $s_3(m)$.
We  prove  the latter identity  by induction. For $m=0$ it clearly
holds.  Assuming  that~\eqref{eq:s3:definition} holds for some integer
$m$ we are going to prove that
\begin{equation}
\label{eq:relation:for:s3}
(m+2)\cdot s_3(m+1)-2(2m+3)\cdot s_3(m)=0\,.
\end{equation}
Since  the  right-hand side of~\eqref{eq:s3:definition} satisfies the
same  relation  (which  is  an  exercise), this completes the step of
induction. It remains to prove~\eqref{eq:relation:for:s3} under
assumption~\eqref{eq:s3:definition}.
\begin{multline*}
(m+2)\cdot s_3(m+1)-2(2m+3)\cdot s_3(m)
=\\=
(m+2)\sum_{k=0}^{m+1}
\frac{(2k)!}{k!\,(k+1)!}\cdot
\frac{(2m+2-2k)!}{(m+1-k)!\,(m+1-k)!}
-\\-
2(2m+3)\sum_{k=0}^{m}
\frac{(2k)!}{k!\,(k+1)!}\cdot
\frac{(2m-2k)!}{(m-k)!\,(m-k)!}
=\\=
(m+2)\frac{(2m+2)!}{(m+1)!(m+2)!}+
\sum_{k=0}^{m}
\frac{(2k)!}{k!\,(k+1)!}\cdot
\frac{(2m-2k)!}{(m-k)!\,(m-k)!}
\cdot\\ \cdot\left(
(m+2)\frac{(2m+2-2k)(2m+1-2k)}{(m+1-k)(m+1-k)}-2(2m+3)
\right)
=\\=
2\frac{(2m+1)!}{(m+1)!\,m!}+
\sum_{k=0}^{m}
\frac{(2k)!}{k!\,(k+1)!}\cdot
\frac{(2m-2k)!}{(m-k)!\,(m-k)!}
\cdot\left(-2\frac{k+1}{m+1-k}\right)
=\\=
2\binom{2m+1}{m}-2\sum_{k=0}^{m}
\frac{(2k)!}{k!\,k!}\cdot
\frac{(2m-2k)!}{(m-k)!\,(m-k+1)!}
=\\=
2\binom{2m+1}{m}-2\sum_{j=0}^{m}
\frac{(2(m-j))!}{(m-j)!\,(m-j)!}\cdot
\frac{(2j)!}{j!\,(j+1)!}
=\\=
2\binom{2m+1}{m}-2s_3(m)=0\,.
\end{multline*}
where    the    last    equality   is   the   induction   assumption.
Identity~\eqref{eq:s3:definition}              and              hence
identity~\eqref{eq:00:11:00} is proved.
\smallskip

\noindent
\textbf{Proof of identity~(\ref{eq:00:00:00}).}
Developing       binomial      coefficients      into      factorials
in~\eqref{eq:00:00:00} and simplifying common factorials in the right
and  left  hand  side  of~\eqref{eq:00:00:00}, we  can  rewrite  this
identity as
$$
\sum_{j=0}^m \binom{2j}{j}\binom{2m-2j}{m-j}=4^m\,,
$$
which is identity (3.90) in~\cite{Gould}.

Proposition~\ref{prop:main:identity:1} is proved.
\end{proof}

\appendix

\section{Removing some squares in the wind-tree model}
\label{a:removing:squares:in:the:wind:tree}

Let us consider the periodic wind-tree models with square obstacles.
If we remove periodically one obstacle out of four,
we can still perform the same construction as before
and end up with a
surface in $\cQ^{hyp}(1^6,-1^6)$ over $\cQ(1^3, -1^7)$.

\begin{figure}[htb]
%
   %
\includegraphics{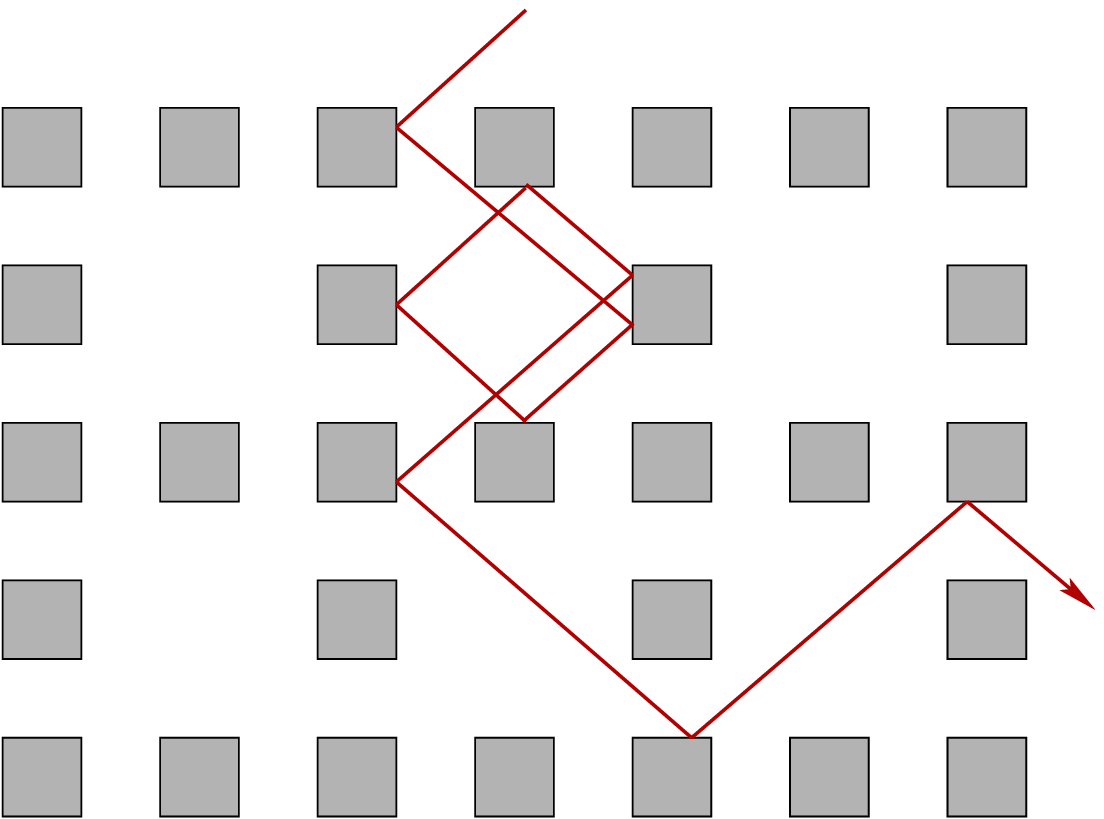}
\vspace{120bp}
\caption{
\label{fig:chessboard}
A square-tiled wind-tree where we regularly remove
one obstacle in every repetitive pattern of four.
   }
\end{figure}

Namely, unfolding the wind-tree in Figure~\ref{fig:chessboard}
and taking the quotient over $\Z\oplus\Z$ we get a compact translation
surface represented in Figure~\ref{fig:chessboard:flat:surface}. This
translation surface corresponds to the
wind-tree in Figure~\ref{fig:chessboard}
exactly in the same way as the compact flat surface
in Figure~\ref{fig:windtree:surface:genus:5} corresponds to the
original wind-tree in Figure~\ref{fig:windtree}.

\begin{figure}[htb]
%
   %
\includegraphics{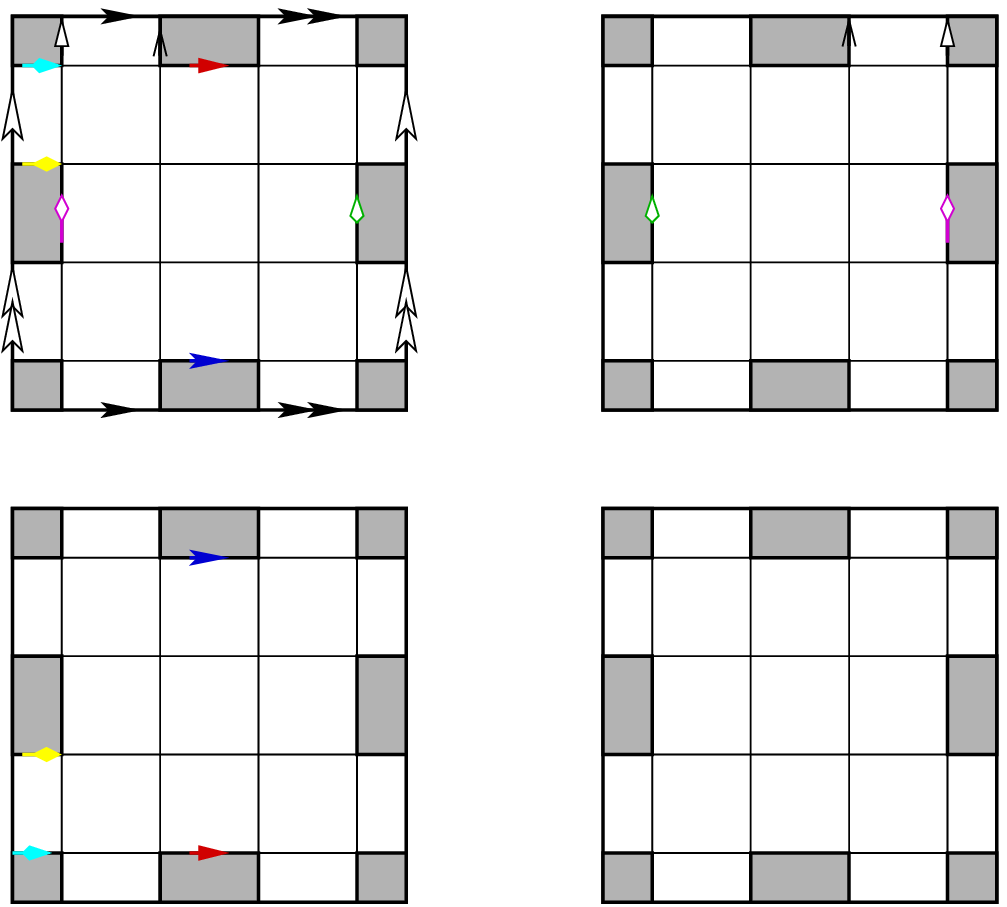}
   %
   %
\vspace{180bp}
\caption{
\label{fig:chessboard:flat:surface}
The translation surface $X\in\cH(2^{12})$
obtained as the quotient over $\Z\oplus\Z$ of an
unfolded wind-tree billiard table in the left
picture in Figure~\ref{fig:chessboard}.
   }
\end{figure}

As in the previous examples,
the  flat    surface    $X$ in
Figure~\ref{fig:chessboard:flat:surface}  has the group $(\Z/2\Z)^3$
as  a  group  of isometries (we have already seen exactly the same
group of symmetries for the surface in
Figure~\ref{fig:windtree:surface:genus:5}). As before, we can choose
as generators the  isometries $\tau_h$ and $\tau_v$ interchanging the
pairs of flat tori with holes in the same rows (correspondingly
columns) by parallel  translations and the isometry $\iota$ acting on
each of the four  tori  with holes as the central symmetry with the
center in the center of the square. Passing to the quotient over
$(\Z/2\Z)$ spanned by $\tau_v$ we get the square-tiled surface
$\hat S\in\cH(2^6)$ as
in Figure~\ref{fig:chessboard:two:tori}.

It is encoded by the following two permutations
\[\scriptsize
\begin{array}{l}
r = (1,2,16,14,15,3)(4,5,6,7)(8,22)(9,21)(10,11,12,13)(14,15,16)(17,18,19,20)(23,24,25,26) \\
u = (1,4,10)(2,5,9,11)(3,7,8,13)(6,12)(14,17,23)(15,18,22,24)(16,20,21,26)(19,25).
\end{array}
\]

We construct its $\operatorname{SL}(2,{\mathbb R})$-orbit;
analyze its elements, and apply
formula~(2.12) from~\cite{EKZ} to find
\begin{equation}
\label{eq:seven:exponents}
\lambda_1(\hat S)+\dots+\lambda_7(\hat S)=\frac{3088}{1053}
\end{equation}
as the sum of the Lyapunov exponents of the resulting
arithmetic Teichm\"uller curve $\cL(\hat S)$.

\begin{figure}[htb]
%
   %
\includegraphics{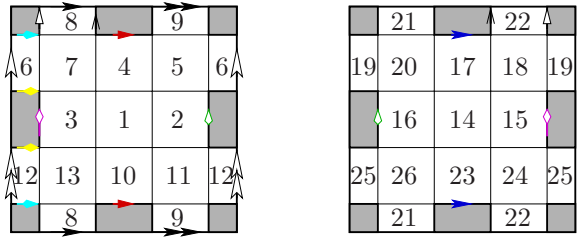}
\begin{picture}(0,0)(0,0)
\begin{picture}(0,0)(0,0)
\put(-85,-12){$8$}
\put(-45,-12){$9$}
\put(-102,-29){$6$}
\put(-85,-29){$7$}
\put(-65,-29){$4$}
\put(-45,-29){$5$}
\put(-28,-29){$6$}
\put(-85,-49){$3$}
\put(-65,-49){$1$}
\put(-45,-49){$2$}
\put(-105,-70){$12$}
\put(-89,-70){$13$}
\put(-68,-70){$10$}
\put(-47,-70){$11$}
\put(-31,-70){$12$}
\put(-85,-86){$8$}
\put(-45,-86){$9$}
\end{picture}
\begin{picture}(0,0)(-120,0)
\put(-85,-12){$21$}
\put(-42,-12){$22$}
\put(-100.5,-29){$19$}
\put(-85,-29){$20$}
\put(-63,-29){$17$}
\put(-43,-29){$18$}
\put(-26,-29){$19$}
\put(-85,-49){$16$}
\put(-63,-49){$14$}
\put(-43,-49){$15$}
\put(-100.5,-70){$25$}
\put(-85,-70){$26$}
\put(-63,-70){$23$}
\put(-43,-70){$24$}
\put(-26,-70){$25$}
\put(-85,-86){$21$}
\put(-42,-86){$22$}
\end{picture}
\end{picture}
\vspace{90bp}
\caption{
\label{fig:chessboard:two:tori}
The  flat  surface $\hat S=X/\tau_v$.
   }
\end{figure}

Now, following the strategy of section~\ref{ss:Original:wind:tree:revisited}
we pass to the second quotient
$\tilde S=X/\langle\tau_h,\tau_v\rangle\in\cQ^{hyp}(1^6,-1^6)$
(on the left of Figure~\ref{fig:chessboard:one:torus:and:a:half})
followed by the quotient
$S=X/\langle\tau_h,\tau_v,\iota\rangle\in\cQ(1^3,-1^{3+4})$
(on the right of Figure~\ref{fig:chessboard:one:torus:and:a:half}).

\begin{figure}[htb]
%
   %
\includegraphics{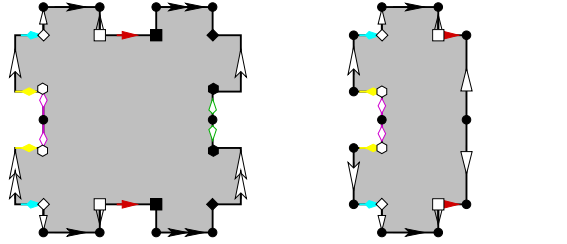}
\vspace{90bp}
\caption{
\label{fig:chessboard:one:torus:and:a:half}
The  flat  surfaces
$S=X/\langle\tau_h,\tau_v\rangle\in\cQ^{hyp}(1^6,-1^6)$
(on the left) and
$S=X/\langle\tau_h,\tau_v,\iota\rangle\in\cQ(1^3,-1^{3+4})$
(on the right). We inverse shadowing at this picture
with respect to Figures~\ref{fig:chessboard:two:tori}
and~\ref{fig:chessboard:flat:surface}: now we shadow the surface,
and not the obstacles. Tiny black discs
at the vertices represent the simple poles; the other vertices
correspond to simple zeroes.
   }
\end{figure}

The surface $\hat S\in\cH(2^6)$ is the orienting double cover
of the surface $\tilde S$. Hence, the Lyapunov spectrum~\eqref{eq:seven:exponents}
of the arithmetic Teichm\"uller curve $\cL(\hat S)$ is the union
of the spectra of Lyapunov exponents
of the arithmetic Teichm\"uller curve $\cL(\tilde S)$:
$$
\{\lambda_1(\hat S),\dots,\lambda_7(\hat S)\}=
\{\lambda_1^-(\tilde S),\dots,\lambda_6^-(\tilde S)\}\cup\{\lambda_1^+(\tilde S)\}\,.
$$
By formula~(2.4) from~\cite{EKZ} we have
$$
\big(\lambda_1^-(\tilde S)+\dots+\lambda_6^-(\tilde S)\big)-\lambda_1^+=
\cfrac{1}{4}\,\cdot\,\sum_{\substack{j \text{ such that}\\
d_j \text{ is odd}}}
\cfrac{1}{d_j+2}=
\frac{1}{4}\cdot 6\cdot\left(\frac{1}{3}+1\right)=2\,.
$$
Together with~\eqref{eq:seven:exponents} this implies that
$$
\lambda_1^+(\tilde S)=\frac{1}{2}\left(\frac{3088}{1053}-2\right)=\frac{491}{1053}.
$$
and, hence, that the diffusion rate for the square-tiled wind-tree as in
in Figure~\ref{fig:chessboard} is given by
$$
\delta=\lambda_1^+(\tilde S)=\frac{491}{1053}.
$$

\medskip

We shall see that in all the other cases, removing some of the obstacles out of
every repetitive block of $2\times 2$ obstacles
in the wind-tree model with any parameters $a$ and $b$ we keep the
diffusion rate $\delta=\lambda_1^+ = 2/3$.
There are three cases to consider:
\begin{enumerate}
\item removing two obstacles which are in the same row or in the same column;
\item removing two obstacles which are on the same diagonal;
\item removing three obstacles.
\end{enumerate}
In the first and in the last case we can just choose a new
fundamental domain of the rectangular lattice (duplicating it
in the first case and choosing it $2\times 2$ bigger in the third case)
to reduce the situation to the original wind-tree with different
parameters. We have seen that the diffusion rate in the original wind-tree
as in Figure~\ref{fig:windtree} does not depend neither on the parameters
of the lattice, nor on the parameters of the obstacle.
Hence in both cases $\delta=\lambda_1^+ =2/3$.

Let us consider the case where we remove two obstacles on the same diagonal
as in Figure~\ref{fig:chessboard:2}.
In this case, we can modify the construction a little bit. We unfold the billiard
as before, but then we quotient the resulting periodic surface by the integer sublattice
spanned by the integer vectors $(1,1)$ and $(1,-1)$ in $\ZZ\oplus\ZZ$
and not buy the entier lattice $\ZZ\oplus\ZZ$ as before.
The quotient belongs to the same locus as
the original wind-tree. We deduce that we again have $\delta=\lambda_1^+ = 2/3$.

Let us emphasize that the above construction is very specific to
the sublattice $\ZZ (1,1) \oplus \ZZ (1,-1)$ which is invariant under reflexions
by the horizontal and vertical axes. In such situation the quotient keeps a $(\ZZ/2\ZZ)^3$ group
of symmetry.

\begin{figure}[htb]
%
   %
\includegraphics{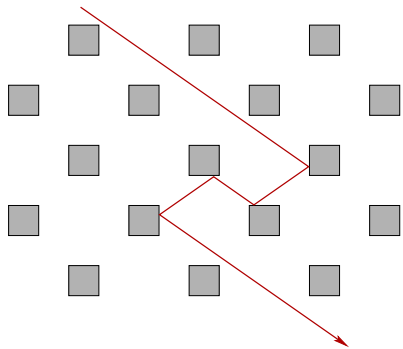}
\vspace{115bp}
\caption{
\label{fig:chessboard:2}
A square-tiled wind-tree where we regularly remove
every second square obstacle has the original diffusion
rate $\delta=2/3$.
   }
\end{figure}

\section{What is next?}
\label{a:What:is:next}

The  name  of the conference and personality of Bill~Thurston suggest
to  discuss  what are the current directions of research in the area.
We  focus  on  those aspects which are somehow related to the current
paper.  The  selection  represents  our particular taste and might be
subjective.  These  problems  are, certainly, known in our community;
some  of  them are already subjects of intensive investigation, so we
do not claim novelty in this discussion.

The  Magic  Wand  Theorem  of Eskin--Mirzakhani--Mohammadi enormously
amplified the importance of the classification of the orbit closures.
Ideally, any problem about an individual flat surface (say, diffusion
rate  for a wind-tree model with obstacles of some specific ``rational
shape'') should be solved as follows: touch the corresponding surface
with  the  Magic Wand and find the corresponding orbit closure $\cL$.
Now touch the complex Hodge bundle over $\cL$ with the Magic Wand and
find  its  irreducible  component containing the two integer cocycles
responsible  for  the  diffusion.  Find or estimate the corresponding
Lyapunov  exponents  and  the problem is solved. This strategy evokes
three problems: how to find an orbit closure? How to find irreducible
components  of  the  Hodge  bundle?  How to estimate the top Lyapunov
exponent of a given invariant flat subbundle?

\subsection{Classification of invariant suborbifolds.}

\begin{Problem}
Classify  all  $\GLR$-invariant suborbifolds in $\cH(d_1,\dots,d_n)$.
\end{Problem}

The  invariant  suborbifolds in the strata of quadratic differentials
are not mentioned since replacing any flat surface corresponding to a
quadratic  differential  by  its  canonical  ramified double cover on
which the induced quadratic differential becomes a global square of a
holomorphic  1-form we get an associated invariant suborbifold in the
corresponding stratum of Abelian differentials.

In genus $2$ the problem is solved by C.~McMullen~\cite{McMullen}.
There are very serious advances in this problem for
the stratum $\cH(4)$, due to
Aulicino, Nguyen and Wright~\cite{Aulicino:Nguyen:Wright} for $\cH^{odd}(4)$
and Nguyen and Wright~\cite{Aulicino:Nguyen:Wright} for $\cH^{hyp}(4)$.
Certain finiteness results for the number of
primitive Teichm\"uller curves are obtained by
Bainbirdge and M\"oller~\cite{Bainbirdge:Moeller}
and by Matheus and Wright~\cite{Matheus:Wright}.
Situation in the Prym loci
is very well understood due to the work of Lanneau and Nguyen~\cite{Lanneau:Nguyen:1},
\cite{Lanneau:Nguyen:2} and~\cite{Lanneau:Nguyen:3}.

The   Theorem   of   Eskin--Mirzakhani--Mohammadi  states  that  such
suborbifolds  correspond to complex affine subspaces in cohomological
coordiantes.           Due           to           result           of
Avila--Eskin--M\"oller~\cite{Avila:Eskin:Moller}  the  projection  of
such  affine  subspace  to absolute cohomology should be a symplectic
subspace:  the  restriction of the intersection form to this subspace
is   nondegenerate.  Results  of  A.~Wright~\cite{Write:field:of:def}
impose conditions on the field of definition. At some point all these
constraints  started  to  seem  so restrictive that there was a doubt
whether any really new invariant suborbifolds are left? Namely, there
is  a  separate  question  of  classification of Teichm\"uller curves
(both  arithmetic and non arithmetic), and there are plenty invariant
suborbifolds  which can be obtained from Teichm\"uller curves or from
connected  components  of  the  strata  by  various ramified covering
constructions.  The question was whether there is anything else (with
exception for some sporadic series of invariant suborbifolds specific
for very small genera).

A  recent  example of an invariant submanifold of enigmatic origin in
$\cH(6)$  (which  does  not  fit  any  of the known schemes mentioned
above) was recently found by M.~Mirzakhani and A.~Wright. For a brief
overview    of   the   state   of   the   art   in   this   direction
see~\cite{Write:Oberwolfach}.

\subsection{Classification    of    invariant   subbundles   of   the
complex Hodge bundle}

The  complex  Hodge bundle $H^1_{\C}$ over the moduli space $\cM_g$
of  curves  has  the homology space $H^1(C;\C)$ as a fiber over the
point represented by the curve $C$. This bundle can be pulled back to
any stratum $\cH(m_1,\dots,m_n)$.

\begin{Problem}
Find  the decomposition of the complex Hodge bundle $H^1_\C$ over
any    given    $\GLR$-invariant    suborbifold    into   irreducible
$\GLR$-equivariant subbundles.
\end{Problem}

The    fact   that   such   decomposition   exists   is   proved   by
S.~Filip~\cite{Filip:semisimplicity}.  Here ``irreducible'' should be
understood  in the sense that there is no further splitting even when
we  pass  to  any  finite  ramified  cover  of  the  $\GLR$-invariant
suborbifold.

The  problem  is  meaningful already for the strata! It is absolutely
frustrating,  but we do not have a proof that the only equivariant
subbundles  of $H^1_{\C}$ over a connected component of any stratum
is  the  tautological  subbundle and its symplectic complement. We do
not  know  either whether $H^1_+$ over any connected component of any
stratum of quadratic differentials is irreducible in this sense.

The  current tools allow, in principal, to prove the latter two facts
more  or  less  by hands for some low-dimensional strata. Namely, one
can  start  with  an  arithmetic Teichm\"uller curve, compute certain
number    of    monodromy    matrices    and   then   use   technique
of~\cite{Matheus:Moeller:Yoccoz}   or   the   method   of   Eskin  as
in~\cite{Forni:Matheus:Zorich} to prove irreducibility of the complex
Hodge  bundle  over  the  Teichm\"uller  curve. As a consequence
of~\cite{Filip:semisimplicity} one gets the irreducibility over
over  the  ambient  stratum.  However,  what is really needed is some
general proof for all strata at once.

A related question is

\begin{Problem}
What  groups are realizable as Zariski closures of leafwise monodromy
groups  of equivariant irreducible blocks of the complex Hodge bundle
$H^1_{\C}$ over $\GLR$-invariant suborbifolds?
\end{Problem}

The                 original                 guess                 of
Forni--Matheus--Zorich~\cite{Forni:Matheus:Zorich}  states  that this
group is always $\operatorname{SU}(p,q)$ for appropriate $p$ and $q$.
The   paper   of  S.~Filip~\cite{Filip:Zariski}  shows  that  general
Hodge-theoretical  arguments  admit  a  priori larger list (including
some more sophisticated representations of $\operatorname{SU}(p,q)$).
However,  it is not clear which of these groups (representations) are
realizable  as  Zariski  closures  of  leafwise  monodromy  groups of
equivariant   subbundles   of   the   complex   Hodge   bundle   over
\textit{$\GLR$-invariant  suborbifolds}  (and not just over some flat
subbundles  of the complex Hodge bundle over abstract submanifolds of
the moduli space).

\subsection{Estimates for individual Lyapunov exponents}

Paper~\cite{EKZ}  provides  a  formula  for  the  sum of the positive
Lyapunov  exponents  of  the  complex  Hodge  bundle  over  along the
Teichm\"uller  geodesic  flow.  Though there is no reason to hope for
exact  values  of  individual Lyapunov exponents, paper~\cite{Fei} of
Fei~Yu  conjectures  that partial sums of Lyapunov exponents might be
estimated  through  Chern  classes of holomorphic vector bundles over
Teichm\"uller  curves  normalized  by  the  Euler  characteristics of
Teichm\"uller curves. More generally (though less precisely):

\begin{Problem}
Study  extremal  properties  of  the  ``curvature''  of  the Lyapunov
subbundles  compared  to  holomorphic subbundles of the Hodge bundle.
Estimate the individual Lyapunov exponents.
\end{Problem}

For  example, estimates for $\lambda^+_1$ over hyperelliptic locus in
the principal stratum and estimates for $\lambda^+_1$ over the entire
principal    stratum    $Q(1,\dots,1)$   of   holomorphic   quadratic
differentials  would  provide  estimates  for  the  diffusion rate in
certain families of wind-tree billiards.

\subsection{Siegel--Veech constants in terms of an adequate intersection theory}

The  sum  of  positive Lyapunov exponents of the complex Hodge bundle
over   an   $\SLR$-invariant   suborbifold   $\cL$   in   a   stratum
$\cH_1(m_1,\dots,m_n)$ is expressed in~\cite{EKZ} as
$$
\lambda_1+\cdots+\lambda_g=
\frac{1}{12}\sum_{i=1}^n \frac{m_i(m_i+2)}{m_i+1}+
\frac{\pi^2}{3}\cdot c_{\mathit{area}}(\cL)\,,
$$
where  $c_{\mathit{area}}(\cL)$  is  the  Siegel--Veech  constant  of
$\cL$.  Currently there are two formulae for $c_{\mathit{area}}(\cL)$
for  two extremal cases of $\cL$. When $\cL$ is a connected component
of  a stratum, the Siegel--Veech constant $c_{\mathit{area}}(\cL)$ is
expressed  as  a  polynomial  in volumes of simpler \textit{principal
boundary  strata}  (normalized by the volume of the initial stratum);
see~\cite{Eskin:Masur:Zorich} for the strata of Abelian differentials
and~\cite{Goujard}  for  the  strata of quadratic differentials. When
$\cL$ is a Teichm\"uller curve, $c_{\mathit{area}}(\cL)$ is expressed
as  the  integral  of  the Chern class of the determinant bundle over
$\cL$    normalized   by   the   Euler   characteristic   of   $\cL$,
see~\cite{Kontsevich},   \cite{Bouw:Moeller},   \cite{EKZ:JMD}.   The
challenge is to construct a bridge between these two cases:

\begin{Problem}
Express   $c_{\mathit{area}}(\cL)$   in   terms   of  an  appropriate
intersection theory.
\end{Problem}

Here  we  do not mean some kind of asymptotic limit formulae, but
something    in    the    spirit    of   ELSV-formula   for   Hurwitz
numbers, see~\cite{ELSV}.

There  is  certain  resemblance  between  the  ``hyperbolic  regime''
studied
in~\cite{Mirzakhani:volumes:and:intersection:theory}--\cite{Mirzakhani:grouth:of:simple:geodesics}
by M.~Mirzakhani
and   the   ``flat   regime''  studied  in~\cite{Eskin:Masur:Zorich}.
M.~Mirzakhani  used  dynamics  on  moduli  space to relate the length
functions   of   simple  geodesics  on  hyperbolic  surfaces  to  the
Weil--Peterson  volumes of the moduli spaces $\cM_{g,n}$ of punctured
Riemann  surfaces,  and  also to relate the Weil--Peterson volumes to
the   intersection   numbers   of   tautological  line  bundles  over
$\cM_{g,n}$.

Morally,  we have somehow similar situation in a parallel flat world.
The  step of relating the counting functions for simple \textit{flat}
geodesics  (for  the  flat metrics in the same conformal class as the
original  hyperbolic  metric)  to polynomial in volumes of the strata
with  respect  to  Masur--Veech  volume  form  is  already  performed
in~\cite{Eskin:Masur:Zorich}  and~\cite{Goujard}. The challenge is to
accomplish the second step and to relate these volumes to an adequate
intersection theory.

Actually, certain parallel between hyperbolic and flat word manifests
in  further  aspects.  For  example,  there  are  conjectural  simple
asymptotic   formulae   for   large  genera  for  the  Weil--Peterson
volumes~\cite{Mirzakhani:Zograf}  and for Masur--Veech volumes. About
ten  years  ago  A.~Eskin  and  one of the authors conjectured a very
simple  and  explicit asymptotic formula for the Masur--Veech volume.
For   the  principal  stratum  $\cH(1^{2g-2})$  this  conjecture  was
recently    proved    in~\cite{Chen:Moeller:Zagier}    by    D.~Chen,
M.~M\"oller, and D.~Zagier.

\subsection{Dynamics on other families of complex varieties.}

One  more challenging direction of study is dynamics of the complex
Hodge   bundle   over   geodesic   flows   over   moduli   spaces  of
higher-dimensional complex manifolds.

\begin{Problem}
Study  dynamics  of  the  Hodge  bundle  over geodesic flows on other
families  of  compact  varieties.  Are  there other dynamical systems
(compared   to   billiards   in   rational   polygons)   which  admit
renormalization leading to dynamics on families of complex varieties?
\end{Problem}

Some  experimental  results for families of Calabi--Yau varieties are
recently   obtained   by  M.~Kontsevich~\cite{Kontsevich:Calabi:Yau}.
S.~Filip studied in~~\cite{Filip:K3} families of K3-surfaces.

\subsection*{Acknowledgments}
The  first  author  is  deeply  indebted  to  the  organizers  of the
conference  ``What's Next? The mathematical legacy of Bill Thurston''
for  the extraordinary job which they have done. We thank MPIM, where
part  of  this  paper was written, for extremely stimulating research
atmosphere.

The numerical computations with square-tiled surfaces
have been done with the computer software \cite{sage}.



\begin{thebibliography}{EMiMo}

\bibitem[AtEZ]{AEZ}
J.~Athreya, A.~Eskin, and A.~Zorich,
\textit{Right-angled  billiards  and  volumes  of  moduli  spaces  of
quadratic differentials on $\CP$}, \texttt{arXiv:1212:16:60} (2012).

\bibitem[ANW]{Aulicino:Nguyen:Wright}
D.~Aulicino, D.-M.~Nguyen, A.~Wright,
\textit{Classification of higher rank orbit closures in $H^{odd}(4)$},
\texttt{arXiv:1308.5879}.

\bibitem[AH]{Avila:Hubert}
A.~Avila, P.~Hubert,
\textit{Recurrence for the wind-tree model},
to appear in Annales de l'Institut Henri Poincar\'e.

\bibitem[AEM\"o]{Avila:Eskin:Moller}
A.~Avila, A.~Eskin, M.~M\"oller,
\textit{Symplectic and isometric $\SLR$-invariant
subbundles of the Hodge bundle},
\texttt{arXiv:1209.2854},
to appear in J. Reine Angew. Math.



\bibitem[BM]{Bainbirdge:Moeller}
M.~Bainbridge, M.~M\"oller,
\textit{Deligne-Mumford-compactification of the real multiplication locus and Teichmueller curves in genus three}
Acta Math. \textbf{208} (2012), 1--92.

\bibitem[Bo]{Boissy:configurations}
C.~Boissy.
\textit{Configurations    of    saddle   connections   of   quadratic
differentials   on   $\mathbb{CP}^1$  and  on  hyperelliptic  Riemann
surfaces}, Comment. Math. Helv.  \textbf{84:4}  (2009), 757--791.

\bibitem[BwMo]{Bouw:Moeller}
I. ~Bouw and M.~M\"oller,
\textit{Teichm\"uller curves, triangle groups and Lyapunov exponents},
Ann. of Math., \textbf{172}, (2010), 139--185.

\bibitem[CkE]{Chaika:Eskin}
J.~Chaika and A.~Eskin,
\textit{Every  flat  surface  is  Birkhoff  and Osceledets generic in
almost every direction},
\texttt{arXiv:1305.1104} (2013).

\bibitem[CM\"oZ]{Chen:Moeller:Zagier}
D.~Chen, M.~M\"oller, D.~Zagier,
in progress.

\bibitem[D]{Delecroix:divergent:trajectories}
V.~Delecroix,
\textit{Divergent directions in some periodic wind-tree models},
Journal of Modern Dynamics \textbf{7:1} (2013), 1--29.

\bibitem[DHL]{Delecroix:Hubert:Lelievre}
V.~Delecroix, P.~Hubert, S.~Lelièvre
\textit{Diffusion for the periodic wind-tree model},
Annales scientifiques de l'ENS \textbf{47:6} (2014), 1085--1110.

\bibitem[Eh]{Ehrenfest} P.~and~T.~Ehrenfest,
Begriffliche  Grundlagen der statistischen Auffassung in der Mechanik
Encykl.  d. Math. Wissensch. IV 2 II, Heft 6, 90 S (1912) (in German,
translated   in:)  The  conceptual  foundations  of  the  statistical
approach  in  mechanics,  (trans.  Moravicsik,  M. J.), 10-13 Cornell
University Press, Itacha NY, (1959).

\bibitem[EKZ1]{EKZ:JMD}
A.~Eskin, M.~Kontsevich, A.~Zorich,
\textit{Lyapunov spectrum of square-tiled cyclic covers},
Journal of Modern Dynamics, \textbf{5}, no. 2, (2011), 319--353.

\bibitem[EKZ2]{EKZ}
A.~Eskin, M.~Kontsevich, A.~Zorich,
\textit{Sum of Lyapunov exponents
of the Hodge bundle with respect to the Teichm\"uller geodesic flow},
Publications de l'IHES, \textbf{120:1} (2014),
207--333.

\bibitem[EMsZ]{Eskin:Masur:Zorich}
A.~Eskin, H.~Masur,  A.~Zorich,
\textit{Moduli   spaces   of  Abelian  differentials:  the  principal
boundary, counting problems and the Siegel--Veech constants},
Publications  Math\'ematiques  de  l'IH\'ES, \textbf{97} (1)
(2003), 61--179.

\bibitem[EMi]{Eskin:Mirzakhani}
A.~Eskin and M.~Mirzakhani,
\textit{Invariant and stationary measures for the
$\SL$ action on moduli space}, \texttt{arXiv:1302.3320}
(2013).

\bibitem[EMiMo]{Eskin:Mirzakhani:Mohammadi}
A.~Eskin, M.~Mirzakhani, A.~Mohammadi,
\textit{Isolation,  equidistribution,  and  orbit  closures  for  the
$\SL$-action on Moduli space}, \texttt{arXiv:1305.3015 } (2013).

\bibitem[ELSV]{ELSV}
T.~Ekedahl, S.~Lando, M.~Shapiro, A.~Vainshtein,
\textit{Hurwitz numbers and intersections on moduli spaces of curves},
Inventiones Mathematicae \textbf{146:2} (2001) 297--327.

\bibitem[Fi1]{Filip:semisimplicity}
S.~Filip,
\textit{Semisimplicity and rigidity of the Kontsevich--Zorich cocycle},
\texttt{arXiv:1307.7314} (2013).

\bibitem[Fi2]{Filip:Zariski}
S.~Filip,
\textit{Zero Lyapunov exponents and monodromy of the Kontsevich-Zorich cocycle},
\texttt{arXiv:1410.2129} (2014).

\bibitem[Fi3]{Filip:K3}
S.~Filip,
\textit{Families of K3 surfaces and Lyapunov exponents},
\texttt{arXiv:1412.1779} (2014).

\bibitem[Fo]{Forni} G.~Forni, \textit{Deviation of ergodic averages for
area-preserving flows on surfaces of higher genus}, Annals of Math.,
\textbf{155}, no. 1, (2002), 1--103.

\bibitem[FMZ]{Forni:Matheus:Zorich}
G.~Forni, C.~Matheus, A.~Zorich,
\textit{Zero Lyapunov exponents of the Hodge
bundle}, Commentari Math. Helvetici, \textbf{89:2} (2014), 489--535.

\bibitem[FU]{Fraczek:Ulcigrai}
K.~Fraczek, C.~Ulcigrai,
\textit{Non-ergodic  $\Z$-periodic billiards and infinite translation
surfaces},
Inventiones Mathematicae \textbf{197} (2014), no. 2, 241--298.

\bibitem[HaWe]{Hardy:Weber}
J.~Hardy, J.~Weber,
\textit{Diffusion  in  a  periodic  wind-tree  model}, J. Math. Phys.
\textbf{21} (7), (1980) pp. 1802--1808.

\bibitem[Gj]{Goujard}
E.~Goujard,
\textit{Siegel--Veech   constants  for  moduli  spaces  of  quadratic
differentials}, in progress.

\bibitem[Go]{Gould}
H.~W.~Gould,
\textit{Combinatorial identities. A standardized set of tables listing
500 binomial coefficient summations}, Rev. ed. (English)
Morgantown (1972).

\bibitem[GrHu]{Grivaux:Hubert}
J.~Grivaux, P.~Hubert,
\textit{Loci  in  strata  of  meromorphic  differentials  with  fully
degenerate lyapunov spectrum},
Journal of Modern Dynamics,
\textbf{8} no. 1 (2014) 61--73.

\bibitem[K1]{Kontsevich}
M.~Kontsevich,
\textit{Lyapunov  exponents  and  Hodge  theory}, ``The mathematical beauty of
physics''  (Saclay,  1996),  (in Honor of C. Itzykson) 318--332, Adv.
Ser. Math. Phys., 24, World Sci. Publishing, River Edge, NJ, 1997.

\bibitem[K2]{Kontsevich:Calabi:Yau}   M.~Kontsevich,  \textit{K\"ahler
random   walks  and  Lyapunov  exponents},  talk  at  the  conference
``Control,  index,  traces  and  determinants'' related to the work of
Jean-Michel                                                   Bismut,
\texttt{http://www.math.u-psud.fr/$\sim$repsurf/ERC/Bismutfest/Bismutfest.html}.

\bibitem[LN1]{Lanneau:Nguyen:1}
E.~Lanneau, D.-M. Nguyen,
\textit{Teichmueller curves generated by Weierstrass Prym eigenforms in genus three and genus four},
J. Topol. \textbf{7} no. 2 (2014) 475--522.

\bibitem[LN2]{Lanneau:Nguyen:2}
E.~Lanneau, D.-M.~Nguyen,
\textit{$GL^+(2,\R)$-orbits in Prym eigenform loci},
preprint \texttt{arXiv:1310.8537}.

\bibitem[LN3]{Lanneau:Nguyen:3}
E.~Lanneau, D.-M.~Nguyen,
\textit{Connected components of Prym loci having real multiplication}
preprint \texttt{arXiv:1408.1064}.

\bibitem[MaWr]{Matheus:Wright}
C.~Matheus, A.~Wright,
\textit{Hodge-Teichmüller planes and finiteness results for Teichmüller curves}
\texttt{arXiv:1308.0832}, to appear in Duke Math. J.

\bibitem[Ms]{Masur}
H.~Masur,
\textit{Interval exchange transformations and measured  foliations},
\textit{Ann. of Math.}, {\bf 115} (1982), 169--200.

\bibitem[MsZ]{Masur:Zorich}
H.~Masur and A.~Zorich,
\textit{Multiple  saddle  connections  on flat surfaces and principal
boundary of the moduli spaces of quadratic differentials},
\textit{Geom. Funct. Anal.}, \textbf{18}  (2008),  no. 3, 919--987.

\bibitem[MM\"oY]{Matheus:Moeller:Yoccoz}
C.~Matheus, M.~M\"oller, J.-C.~Yoccoz,
\textit{A  criterion  for  the simplicity of the Lyapunov spectrum of
square-tiled surfaces},
\texttt{arXiv:1305.2033},
to appear in Inventiones Mathematicae.

\bibitem[McM1]{McMullen}
C.~McMullen,
\textit{   Dynamics  of  $\SL$  over  moduli  space  in  genus  two},
Ann. of Math. (2) \textbf{165} (2007), no. 2, 397--456.

\bibitem[Mi1]{Mirzakhani:volumes:and:intersection:theory}
M.~Mirzakhani,
\textit{Weil--Petersson volumes and intersection theory on the moduli
space of curves}, Journal of Amer. Math. Soc. \textbf{20} (2007), no.
1, 1--23.

\bibitem[Mi2]{Mirzakhani:simple:geodesics:and:volumes}
M.~Mirzakhani,
\textit{Simple  geodesics and Weil-Petersson volumes of moduli spaces
of bordered Riemann surfaces}, Invent. Math. \textbf{167} (2007), no.
1, 179--222.

\bibitem[Mi3]{Mirzakhani:grouth:of:simple:geodesics}
M.~Mirzakhani,
\textit{Growth of the number of simple closed geodesics on hyperbolic
surfaces}, Annals of Math. (2) \textbf{168} (2008), no. 1, 97–125.

\bibitem[Mi3]{Mirzakhani:Zograf}
M.~Mirzakhani, P.~Zograf,
\textit{Towards large genus asymtotics of intersection numbers on moduli spaces of curves},
\texttt{arXiv:1112.1151} (2012).


\bibitem[S$^+$09]{sage}
W.\thinspace{}A. Stein et~al., \emph{{S}age {M}athematics {S}oftware ({V}ersion
6.4)}, The Sage Development Team, 2014, {\tt http://www.sagemath.org}.

\bibitem[Ve]{Veech}
W.~Veech,
\textit{  Gauss measures for transformations on the space of interval
exchange maps}, Annals of Math., \textbf{115} (1982), 201--242.

\bibitem[Vo]{Vorobets}
Y.~Vorobets,
\textit{Periodic geodesics on generic translation surfaces},
Algebraic and topological dynamics,
205--258, Contemp. Math., 385, Amer. Math. Soc., Providence, RI,  2005.

\bibitem[W1]{Write:field:of:def}
A.~Wright,
\textit{The  field  of definition of affine invariant submanifolds of
the moduli space of Abelian differentials},
Geometry and Topology \textbf{18} no. 3 (2014), 1323--1341.

\bibitem[W2]{Write:cylinder}
A.~Wright,
\textit{Cylinder deformations in orbit closures of translation surfaces},
Geometry and Topology, to appear.

\bibitem[W3]{Write:Oberwolfach}
A.~Wright,
\textit{Orbit closures of translation surfaces},
Oberwolfach Report \textbf{15} (2014), 884--885.

\bibitem[Yu]{Fei}
Fei~Yu,
\textit{Eigenvalues of curvature, Lyapunov exponents
and Harder--Narasimhan filtrations},
\texttt{arXiv:1408.1630} (2014).


%
%

\bibitem[Z]{Zorich:How:do} A.~Zorich, \textit{How do the leaves of a
closed   1-form   wind   around   a   surface},  In  the  collection:
``Pseudoperiodic   Topology'',  \textit{AMS  Translations},  Ser.  2,
\textbf{197}, AMS, Providence, RI, (1999), 135--178.

\end{thebibliography}
\end{document}